\documentclass[english]{amsart}

\usepackage{amsmath}
\usepackage{dsfont}
\usepackage{amssymb}
\usepackage{amsthm}
\usepackage{amscd}
\usepackage{babel}
\usepackage[latin1]{inputenc}
\usepackage{graphicx}

\newtheorem{Theorem}{Theorem}

\newtheorem{Corollary}{Corollary}

\newtheorem{Proposition}{Proposition}

\newtheorem{Lemma}[Theorem]{Lemma}

\theoremstyle{definition}
\newtheorem{rem}{Remark}

\renewcommand{\subjclassname}{AMS \textup{2010} Mathematics Subject
Classification\ }

\author{Jos\'{e} Mar\'{i}a Grau}
\address{Departamento de Matem\'{a}ticas, Universidad de Oviedo\\ Avda. Calvo Sotelo s/n, 33007 Oviedo, Spain}
\email{grau@uniovi.es}

\author{C. Miguel}
\address{Instituto de Telecomunica\c c\~oes, Beira Interior University, Department of Mathematics, Covilh\~a, Portugal}
\email{celino@ubi.pt}

\author{Antonio M. Oller-Marc\'{e}n}
\address{Centro Universitario de la Defensa de Zaragoza\\ Ctra. Huesca s/n, 50090 Zaragoza, Spain}
\email{oller@unizar.es}

\title{On the zero divisor graphs of the ring of Lipschitz integers modulo $n$.}

\begin{document}

\begin{abstract} This article studies the zero divisor graphs of the ring of Lipschitz integers modulo $n$.  In particular we focus on the number of vertices, the diameter and the girth. We also give some results regarding the domination number of these graphs.

\end{abstract}

\maketitle
\subjclassname{: 05C75; 11R52.}

\keywords{Keywords:Lipschitz Integers; Zero Divisor Graphs;  Diameter; Girth.}

\section{Introduction}
The idea of studying the interplay between ring-theoretic properties of a ring $R$ and graph-theoretic properties of a graph defined after it, is quite recent. It was first introduced for commutative rings by Beck in 1988 \cite{be}. In Beck's definition the vertices of the graph are the elements of the ring and two distinct  vertices $x$ and $y$ are adjacent if and only if $xy=0$. Later, Anderson and Livingston \cite{an} slightly modified this idea, considering only the non-zero zero divisors of $R$ as vertices of the graph with the same adjacency condition. Redmond \cite{re} extended this notion of zero-divisor graph to noncommutative rings.

Given a noncommutative ring $R$, we define two different graphs associated to $R$. The directed zero divisor graph, $\Gamma(R)$, and the undirected zero divisor graph, $\overline\Gamma(R)$. Both graphs share the same vertex set, namely, the set $Z(R)^{*}$ of non-zero zero divisors of $R$. In $\Gamma(R)$, given two distinct vertices $x$ and $y$, there is a directed edge of the form $x\rightarrow y$ if and only if $xy=0$. On the other hand, two distinct vertices $x$ and $y$ of $\overline\Gamma(R)$ are connected by an edge if and only if either $xy=0$ or $yx=0$.
Several properties of zero divisor graphs of different general classes of rings are studied in \cite{ak, ak1, ku, re, wu}.

Recall that, if $n>1$ is a rational integer and $\langle n\rangle$ is the ideal in the Gaussian integers generated by $n$, then the factor ring $\mathbb{Z}[i]/\langle n\rangle$ is isomorphic to the Gaussian integers modulo $n$
\begin{equation*}\mathbb{Z}_n[i]:=\{a+bi : a, b\in\mathbb Z_n\}.\end{equation*}
The zero divisor graph of the ring of Gaussian integers modulo $n$ has recently received great attention \cite{ab, ab1, na}.

The algebraic construction defined above for the Gaussian integers can be easily extended to the ring $\mathbb{Z}[i,j,k]$ of Lipschitz integer quaternions. Indeed, let
again be $n>1$ a rational integer and denote by $\langle n\rangle$ the principal ideal in $\mathds Z[i, j, k]$ generated by $n$. Then, the factor ring $\mathds Z[i, j, k]/\langle n\rangle$ is isomorphic to
\begin{equation*}\mathds Z_n[i, j, k]:=\{a+ib+cj+dk:a, b, c, d\in\mathbb{Z}_n\},\end{equation*}
which is called the ring of Lipschitz quaternions modulo $n$.

The aim of this paper is to study the zero divisor graphs of the ring of Lipschitz quaternions modulo $n$, both the directed $\Gamma(\mathbb Z_n[i, j, k])$ and the undirected $\overline\Gamma(\mathbb Z_n[i, j, k])$.

If $n=p_1^{r_1}\ldots p_k^{r_k}$, is the prime power factorization of $n$, the Chinese remainder theorem induces a natural isomorphism
\begin{equation}\label{decom}\mathbb Z_n[i, j, k]\cong\mathbb Z_{p_1^{r_1}}[i, j, k]\oplus\ldots\oplus\mathbb Z_{p_k^{r_k}}[i, j, k].\end{equation}
Therefore, in order to study the structure of the rings $\mathbb Z_n[i, j, k]$ we can restrict ourselves to the prime power case.

If $p$ is an odd prime and $l$ is a positive integer, then  $\mathbb Z_{p^l}[i, j, k]$ is isomorphic to the full matrix ring $M_2(\mathbb Z_{p^l})$ \cite{gmo,vi}. Consequently, for an odd positive integer $n$ the ring $\mathbb Z_n[i, j, k]$ is isomorphic to the matrix ring $M_2(\mathbb Z_{n})$. Hence, in this case we can use known results about the zero divisor graph of matrix rings over commutative rings.

Unfortunately, if $n$ is even it is no longer true that $\mathbb Z_n[i, j, k]$ is isomorphic to the matrix ring $M_2(\mathbb Z_n)$. In fact, note that an element
$z=z_0+z_1i+z_2j+z_3k\in\mathbb Z_n[i, j, k]$ is a unit if and only if its norm $\|z\|=z_0^2+z_1^2+z_2^2+z_3^2$ is a unit in $\mathbb Z_n$. Since
\begin{equation*}\label{E1}\|z+w\|=\|z\|+\|w\|+2\textrm{Re}(z\overline{w}),\end{equation*}
the sum of two units of $\mathbb Z_n[i, j, k]$ is never a unit. This fact is clearly false in $M_2(\mathbb Z_n)$ and the claim holds.

\section{The number of vertices}

Recall that both graphs $\Gamma(\mathbb Z_n[i, j, k])$ and $\overline\Gamma(\mathbb Z_n[i, j, k])$ share the same set of vertices. Namely, the non-zero zero divisors of $\mathbb Z_n[i, j, k]$. Due to the isomorphism (\ref{decom}) we can focus on the case when $n$ is a prime power.

For an odd prime power $p^\alpha$, we have the isomorphism $\mathbb Z_{p^\alpha}[i, j, k]\cong M_2(\mathbb Z_{p^\alpha})$. Thus, it is enough to determine the number of non-zero zero divisors in the matrix ring $M_2(\mathbb Z_{p^{\alpha}})$. We do so in the following proposition.

\begin{Proposition}
Let $p$ be an odd prime number and $\alpha\geq 1$. Then, the
number of zero divisors in $M_2(\mathbb{Z}_{p^\alpha})$ is
$$p^{4\alpha-1}+p^{4\alpha-2}-p^{4\alpha-3}.$$
\end{Proposition}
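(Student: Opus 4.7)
The plan is to count zero divisors by counting their complement, the units, since in any finite ring every element is either a unit or a (two-sided) zero divisor. Thus the number of zero divisors in $M_2(\mathbb{Z}_{p^\alpha})$ equals $p^{4\alpha} - |\mathrm{GL}_2(\mathbb{Z}_{p^\alpha})|$.

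First I would recall that a matrix $A\in M_2(\mathbb{Z}_{p^\alpha})$ is a unit if and only if $\det(A)$ is a unit in $\mathbb{Z}_{p^\alpha}$, i.e.\ if and only if $p\nmid\det(A)$. Equivalently, $A$ is a unit if and only if its image under the entrywise reduction map $\pi\colon M_2(\mathbb{Z}_{p^\alpha})\to M_2(\mathbb{F}_p)$ lies in $\mathrm{GL}_2(\mathbb{F}_p)$. Since $\pi$ is a surjective ring homomorphism with kernel of size $(p^{\alpha-1})^4=p^{4(\alpha-1)}$, each element of $\mathrm{GL}_2(\mathbb{F}_p)$ has exactly $p^{4\alpha-4}$ preimages that are units, so
\begin{equation*}
|\mathrm{GL}_2(\mathbb{Z}_{p^\alpha})|=p^{4\alpha-4}\,|\mathrm{GL}_2(\mathbb{F}_p)|.
\end{equation*}

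Next I would use the classical count $|\mathrm{GL}_2(\mathbb{F}_p)|=(p^2-1)(p^2-p)$, obtained by choosing a nonzero first column in $p^2-1$ ways and a second column not in its $\mathbb{F}_p$-span in $p^2-p$ ways. Expanding,
\begin{equation*}
(p^2-1)(p^2-p)=p^4-p^3-p^2+p,
\end{equation*}
so $|\mathrm{GL}_2(\mathbb{Z}_{p^\alpha})|=p^{4\alpha}-p^{4\alpha-1}-p^{4\alpha-2}+p^{4\alpha-3}$. Subtracting from $p^{4\alpha}$ yields exactly $p^{4\alpha-1}+p^{4\alpha-2}-p^{4\alpha-3}$, the claimed formula.

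There is no real obstacle here; the only delicate points are justifying that non-units in a finite ring are zero divisors (a standard pigeonhole argument on left/right multiplication maps) and the kernel-size computation for the reduction $\pi$, both of which are routine. Everything else is straightforward arithmetic.
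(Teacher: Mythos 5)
Your proof is correct, and it reaches the formula by a complementary and somewhat more elementary route than the paper. The paper works on the zero-divisor side: it identifies the nilradical $\mathcal{N}(M_2(\mathbb{Z}_{p^\alpha}))=M_2(p\mathbb{Z}_{p^\alpha})$, invokes the structure theory of Artinian simple rings together with Wedderburn's little theorem to recognize the quotient as $M_2(\mathbb{Z}_p)$, and then multiplies the (asserted, not proved) count of $p^3+p^2-p$ zero divisors in $M_2(\mathbb{Z}_p)$ by the fibre size $p^{4(\alpha-1)}$. You work on the unit side: a matrix over $\mathbb{Z}_{p^\alpha}$ is invertible iff its determinant is a unit iff its entrywise reduction lies in $\mathrm{GL}_2(\mathbb{F}_p)$, so $|\mathrm{GL}_2(\mathbb{Z}_{p^\alpha})|=p^{4\alpha-4}(p^2-1)(p^2-p)$, and the zero divisors are the complement. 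The underlying reduction is the same in both cases --- your kernel $M_2(p\mathbb{Z}_{p^\alpha})$ is exactly the paper's nilradical, and both arguments pick up the factor $p^{4(\alpha-1)}$ from its cardinality --- but your version replaces the appeal to Wedderburn theory with the concrete determinant criterion, and it actually supplies the column-counting proof of $|\mathrm{GL}_2(\mathbb{F}_p)|=(p^2-1)(p^2-p)$, which is the step the paper leaves unjustified. One small point worth making explicit in your write-up: the proposition's formula counts $0$ among the zero divisors (the paper subtracts $1$ later when counting vertices), which is consistent with your computation of ``all non-units,'' so no discrepancy arises; and the fact that every non-unit in a finite ring is a two-sided zero divisor is, as you note, the standard pigeonhole argument on the multiplication maps.
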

\begin{proof}
Let us denote by $\mathcal{N}(M_2(\mathbb{Z}_{p^{\alpha}}))$ the nilradical of $M_2(\mathbb{Z}_{p^{\alpha}})$. It is known \cite[p.125]{ke} that $\mathcal{N}(M_2(\mathbb{Z}_{p^{\alpha}}))=M_2(\mathcal{N}(\mathbb{Z}_{p^{\alpha}}))$ and recall that $\mathcal{N}(\mathbb{Z}_{p^{\alpha}})=\{a\in\mathbb{Z}_{p^{\alpha}}:\gcd(a,p)>1\}$ is precisely the unique maximal ideal of the local ring $\mathbb{Z}_{p^{\alpha}}$.
This implies that the factor ring $M_2(\mathbb{Z}_{p^\alpha})/\mathcal{N}(M_2(\mathbb{Z}_{p^\alpha}))$ is simple, because every ideal of the full matrix ring $M_n(R)$ over a unital commutative ring $R$ is of the form $M_n(L)$ for some ideal $L$ of $R$.

Now, since every artinian simple ring is isomorphic to the full matrix ring over a division ring \cite[p.39]{lam} and by the well-known Wedderburn's little theorem a finite division ring is a field, we conclude that
$M_2(\mathbb{Z}_{p^\alpha})/\mathcal{N}(M_2(\mathbb{Z}_{p^\alpha}))$ is isomorphic to a full matrix ring over a finite field.

Moreover, since
\begin{equation*}\left|M_2(\mathbb{Z}_{p^\alpha})/\mathcal{N}(M_2(\mathbb{Z}_{p^\alpha}))\right|=\frac{p^{4\alpha}}{p^{4(\alpha-1)}}=p^4,\end{equation*}
it follows that $M_2(\mathbb{Z}_{p^\alpha})/\mathcal{N}(M_2(\mathbb{Z}_{p^\alpha}))$ is isomorphic to the full matrix ring $M_2(\mathbb{Z}_p)$. 

Finally, since every element of $\mathcal{N}(M_2(\mathbb{Z}_{p^\alpha}))$ is a zero divisor and
there are exactly $p^3+p^2-p$ zero divisors in $M_2(\mathbb{Z}_p)$, it follows that the number of zero divisors in
$M_2(\mathbb{Z}_{p^\alpha})$ is
$$p^{4(\alpha-1)}(p^3+p^2-p)=p^{4\alpha-1}+p^{4\alpha-2}-p^{4\alpha-3},$$
as claimed.
\end{proof}

\begin{rem}
\label{remuniimp}
Since in a finite unital ring every element is either a zero divisor or a unit, it follows that the number of units in $M_2(\mathbb Z_{p^\alpha})$ for an odd prime $p$ is
\begin{equation*}\label{uniimpar}p^{4\alpha}-p^{4\alpha-1}-p^{4\alpha-2}+p^{4\alpha-3}.\end{equation*}
\end{rem}

If $p=2$ the isomorphism $\mathbb Z_{2^t}[i, j, k]\cong M_2(\mathbb Z_{2^t})$ is no longer true. Consequently, we must use a different approach in order to compute the number of non-zero zero divisors of $\mathbb Z_{2^t}[i, j, k]$.

\begin{Proposition}
\label{t1}
The number of vertices of the graph $\overline\Gamma(\mathbb Z_{2^t}[i, j , k])$ is  $2^{4t-1}-1$.
\end{Proposition}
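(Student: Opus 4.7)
The plan is to count units directly using the norm characterization stated in the introduction, and then exploit the fact that in a finite ring every non-unit is a (two-sided) zero divisor.

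First I would recall the criterion already used in the introduction: an element $z=z_0+z_1i+z_2j+z_3k\in\mathbb Z_{2^t}[i,j,k]$ is a unit if and only if its norm $\|z\|=z_0^2+z_1^2+z_2^2+z_3^2$ is a unit in $\mathbb Z_{2^t}$, i.e.\ if and only if $\|z\|$ is odd. Since $z_\ell^2\equiv z_\ell\pmod 2$, this is equivalent to saying that an odd number of the coordinates $z_0,z_1,z_2,z_3$ are odd.

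Next I would count. Each coordinate ranges over $\mathbb Z_{2^t}$, which contains $2^{t-1}$ odd and $2^{t-1}$ even residues. So the number of tuples $(z_0,z_1,z_2,z_3)$ with exactly one or exactly three odd coordinates equals
\begin{equation*}
\left(\binom{4}{1}+\binom{4}{3}\right)(2^{t-1})^{4}=8\cdot 2^{4t-4}=2^{4t-1}.
\end{equation*}
Hence $|U(\mathbb Z_{2^t}[i,j,k])|=2^{4t-1}$.

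Finally I would invoke the standard fact that in a finite unital (possibly noncommutative) ring every element is either a unit or a two-sided zero divisor: if $a$ is not a unit then left multiplication by $a$ fails to be injective on the finite set $\mathbb Z_{2^t}[i,j,k]$, yielding a nonzero $b$ with $ab=0$, and similarly on the right. Therefore the vertex set $Z(\mathbb Z_{2^t}[i,j,k])^{*}$ consists of all non-unit, nonzero elements, so
\begin{equation*}
|Z(\mathbb Z_{2^t}[i,j,k])^{*}|=2^{4t}-2^{4t-1}-1=2^{4t-1}-1,
\end{equation*}
as required.

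The calculation is essentially routine; the only conceptually delicate point is the last step, the observation that a non-unit in a finite (noncommutative) ring with identity must be a zero divisor. Everything else is a straightforward parity count built on the norm characterization of units recalled at the end of the introduction.
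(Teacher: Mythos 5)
Your proof is correct and follows essentially the same route as the paper: both arguments reduce the problem to counting units via the criterion that $z$ is a unit if and only if $\|z\|$ is odd, and both then invoke the unit/zero-divisor dichotomy in a finite unital ring. The only cosmetic difference is that you count the odd-norm elements by an explicit parity argument on the coordinates, whereas the paper observes that the non-units form the kernel of the surjection $\mathbb Z_{2^t}[i,j,k]\to\mathbb Z_2$ and so have index $2$; both give $2^{4t-1}$ units and hence $2^{4t-1}-1$ vertices.
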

\begin{proof}
Note that an element $z=x_0+x_1i+x_2j+x_3k\in\mathbb Z_{2^t}[i, j , k]$ is a unit if and only if its norm
 $\|x\|=x_0^2+x_1^2+x_2^2+x_3^2$ is a unit in $\mathbb Z_{2^t}$. On the other hand, an element is a unit in $\mathbb Z_{2^t}$ if its reduction modulo $2$ is a unit. So consider the composition of homomorphisms
 \begin{equation*}\mathbb Z_{2^t}[i, j , k] \xrightarrow{\|.\|}\mathbb Z_{2^t}\xrightarrow{mod\; 2}\mathbb Z_2,\end{equation*}
 the kernel of this composition if the set of zero divisors in $\mathbb Z_{2^t}[i, j , k]$. Thus,
 \begin{equation*}
\mathbb Z_{2^t}[i, j , k]/ Z(\mathbb Z_{2^t}[i, j , k])\cong \mathbb Z_2,
\end{equation*}
 where  $Z(\mathbb Z_{2^t}[i, j , k])$ denotes the set of zero divisors of $\mathbb Z_{2^t}[i, j , k]$. Therefore $|\mathcal U(\mathbb Z_{2^t}[i, j , k])|=| Z(\mathbb Z_{2^t}[i, j , k])|=2^{4t-1}.$ Hence, there are exactly $2^{4t-1}-1$ non-zero zero divisors in $\mathbb Z_{2^t}[i, j , k]$, which is the number of vertices of the graph $\overline\Gamma(\mathbb Z_{2^t}[i, j , k])$ as claimed.
\end{proof}

\begin{rem}
\label{remuni2}
Observe that,  as in Remark \ref{remuniimp},  the number of units in $\mathbb Z_{2^t}[i, j , k]$ is $2^{4t-1}$.
\end{rem}

Recall that, given a direct sum of rings $R=R_1\oplus\ldots\oplus R_k$, an element $r\in R$ is a unit if and only if every projection of $r$ in $R_i$ is a unit in $R_i$. Hence, if $n=2^tp_1^{\alpha_1}\cdots p_k^{\alpha_k}$ is the prime power decomposition of $n$, isomorphism (\ref{decom}) together with Remarks \ref{remuniimp} and \ref{remuni2} lead to:
\begin{equation*}
|\mathcal U(\mathbb Z_{n}[i, j , k])|=\begin{cases}
\prod\limits_{i=1}^k\left(p_i^{4\alpha_i}-p_i^{4\alpha_i-1}-p_i^{4\alpha_i-2}+p_i^{4\alpha_i-3}\right), & \textrm{if $t=0$};\\ 2^{4t-1}\prod\limits_{i=1}^k(p_i^{4\alpha_i}-p_i^{4\alpha_i-1}-p_i^{4\alpha_i-2}+p_i^{4\alpha_i-3}), & \textrm{if $t>0$}.
\end{cases}
\end{equation*}

As a consequence of the previous work we have the main result of this section.

\begin{Theorem}
Let $n=2^tp_1^{\alpha_1}\cdots p_k^{\alpha_k}$ be the prime power decomposition of $n$. Then, the number of vertices in
the graph $\Gamma(\mathbb Z_n[i, j, k])$ or $\overline\Gamma(\mathbb Z_n[i, j, k])$ is:
$$
|V(\Gamma(\mathbb Z_n[i, j, k]))|=\begin{cases}
n^4-\prod\limits_{i=1}^k\left(p_i^{4\alpha_i}-p_i^{4\alpha_i-1}-p_i^{4\alpha_i-2}+p_i^{4\alpha_i-3}\right)-1, & \textrm{if $t=0$};\\
n^4-2^{4t-1}\prod\limits_{i=1}^k(p_i^{4\alpha_i}-p_i^{4\alpha_i-1}-p_i^{4\alpha_i-2}+p_i^{4\alpha_i-3})-1, & \textrm{if $t>0$}.
\end{cases}
$$
\end{Theorem}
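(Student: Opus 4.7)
The proof is essentially a bookkeeping argument that puts together what has already been assembled in the section. I would proceed as follows.

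First I would observe that since $\mathbb{Z}_n[i,j,k]$ is a finite ring with $n^4$ elements, every element is either a unit or a zero divisor, with $0$ counted among the zero divisors. Hence
\begin{equation*}
|V(\Gamma(\mathbb Z_n[i, j, k]))| = |Z(\mathbb Z_n[i, j, k])^{*}| = n^{4} - |\mathcal U(\mathbb Z_n[i, j, k])| - 1.
\end{equation*}
Since $\Gamma(\mathbb Z_n[i,j,k])$ and $\overline\Gamma(\mathbb Z_n[i,j,k])$ share the same vertex set, it suffices to compute this single quantity.

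Next, I would invoke the decomposition (\ref{decom}) coming from the Chinese remainder theorem. Under a direct sum decomposition of rings, an element is a unit precisely when each of its projections is a unit, so $|\mathcal U(\mathbb Z_n[i,j,k])|$ factors as a product of the unit counts of the local factors $\mathbb Z_{p_i^{\alpha_i}}[i,j,k]$ and, when $t>0$, of $\mathbb Z_{2^t}[i,j,k]$. The unit counts for the odd prime power factors are supplied by Remark \ref{remuniimp}, which gives $p_i^{4\alpha_i}-p_i^{4\alpha_i-1}-p_i^{4\alpha_i-2}+p_i^{4\alpha_i-3}$, and the unit count for the $2$-part is supplied by Remark \ref{remuni2}, which gives $2^{4t-1}$.

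Substituting these into the displayed formula for $|\mathcal U(\mathbb Z_n[i,j,k])|$ that precedes the theorem and subtracting from $n^4-1$ yields the two cases in the statement, according to whether $t=0$ or $t>0$. There is no real obstacle here: the only thing to be careful about is remembering to subtract the extra $1$ for the zero element (which is a zero divisor but excluded from the vertex set of $\Gamma$ and $\overline\Gamma$), and to treat the cases $t=0$ and $t>0$ separately because the formula for $|\mathcal U(\mathbb Z_{2^t}[i,j,k])|$ is not a specialization of the odd prime formula.
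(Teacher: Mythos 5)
Your proposal is correct and follows essentially the same route as the paper: it applies the displayed formula for $|\mathcal U(\mathbb Z_n[i,j,k])|$ obtained from the Chinese remainder decomposition and Remarks \ref{remuniimp} and \ref{remuni2}, and then uses the fact that every non-zero element of a finite ring is either a unit or a zero divisor. Your explicit attention to subtracting $1$ for the zero element is a welcome clarification of a step the paper leaves implicit.
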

\begin{proof}
Just apply the previous observation recalling that the non-zero elements of a finite ring are either units or zero-divisors.
\end{proof}

\section{The diameter}

We recall that the {\it distance} between two distinct vertices $a$ and $b$ of a graph, denoted by $\textrm{d}(a,b)$, is the length of the shortest path connecting them (the distance being infinity if no such path exists). The {\it diameter} of a graph $G$, denoted by $\textrm{diam}(G)$, is given by
\begin{equation*}\textrm{diam}(G)=sup\{\textrm{d}(a, b) : \textrm{$a, b$ distinct vertices of $G$}\}.\end{equation*}

Our objective in this section is to find the diameter of the directed zero divisor graph $\Gamma(\mathbb Z_n[i, j, k])$ and of the undirected zero divisor graph $\overline\Gamma(\mathbb Z_{n}[i, j , k])$.

Recall that $Z_L(R)$ and $Z_R(R)$ denote, respectively, the set of left and right zero divisors of $R$. The following result was proved in \cite{re}.

\begin{Theorem}
\label{rediam}
Let $R$ be a noncommutative ring, with $Z^{*}(R)\neq\emptyset$. Then $\Gamma(R)$ is connected if and only if $Z_L(R)=Z_R(R)$. If $\Gamma(R)$ is connected, then $\rm{diam}(\Gamma(R))\leq 3$.
\end{Theorem}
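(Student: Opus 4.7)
The plan is to handle the equivalence and the diameter bound separately. For the forward implication I would argue by contrapositive: if $x \in Z_L(R) \setminus Z_R(R)$, then $x$ is a nonzero zero divisor, hence a vertex of $\Gamma(R)$, but no nonzero $y$ satisfies $yx = 0$. Thus no directed edge of $\Gamma(R)$ terminates at $x$, so no other vertex can reach $x$ and the graph is disconnected. The case $x \in Z_R(R) \setminus Z_L(R)$ is symmetric (now $x$ has no outgoing edges).

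For the converse and the diameter bound, assume $Z_L(R) = Z_R(R)$, so every vertex of $\Gamma(R)$ is both a left and a right zero divisor. Given distinct vertices $a,b$, I aim to build a directed path from $a$ to $b$ of length at most $3$. The case $ab=0$ is immediate. Otherwise I choose nonzero $a',b' \in Z^{*}(R)$ with $aa'=0=b'b$, which exist by hypothesis. If $ab'=0$ I use $a \to b' \to b$, and if $a'b=0$ I use $a \to a' \to b$; in each case the intermediate vertex is different from both $a$ and $b$, since any coincidence would force $ab=0$.

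The interesting remaining case is when $ab$, $ab'$ and $a'b$ are all nonzero. The key trick is to introduce $c := a'b'$: associativity gives $ac = (aa')b' = 0$ and $cb = a'(b'b) = 0$. If $c \neq 0$, then $c$ is a vertex distinct from $a$ and $b$ (either equality again forces $ab = 0$), and $a \to c \to b$ is a path of length $2$. If $c = 0$, then $a \to a' \to b' \to b$ is a path of length $3$, and the six pairwise distinctness relations required all reduce to the standing assumption that $ab$, $ab'$, $a'b$ are nonzero. The main obstacle is precisely this last case: neither $a'$ nor $b'$ alone serves as an intermediary, and the resolution hinges on the dichotomy that $a'b'$ either supplies a length-$2$ bridge when nonzero, or splits into a length-$3$ bridge when zero. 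The surrounding verifications that the constructed elements genuinely lie in $Z^{*}(R)$ and that consecutive vertices along each path are distinct are routine bookkeeping, but they must be tracked carefully to ensure we have a bona fide path in $\Gamma(R)$.
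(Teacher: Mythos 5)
Your argument is correct and complete: the contrapositive for the connectivity criterion, and the case analysis on $ab$, $ab'$, $a'b$ with the pivot element $a'b'$ (a length-$2$ bridge when nonzero, a length-$3$ path $a\to a'\to b'\to b$ when zero) is exactly the standard proof. Note, however, that the paper itself offers no proof of this statement --- it is quoted from Redmond \cite{re} --- so what you have reconstructed is essentially Redmond's original argument rather than an alternative to anything in this article; your distinctness checks (each coincidence of vertices forcing one of $ab$, $ab'$, $a'b$ to vanish) are all verified correctly.
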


Note that in any finite ring $R$ we have $Z_L(R)=Z_R(R)$. Hence, the previous theorem implies that the directed zero divisor graph $\Gamma(\mathbb Z_n[i, j, k])$ is connected and $diam(\Gamma(\mathbb Z_n[i, j, k]))\leq 3$. We will now see that, in many cases, equality holds. To do so, we first need a technical result involving the direct sum of finite unital noncommutative rings. A commutative version was established in \cite{ax}.

\begin{Lemma}
\label{l3}
  
Let $R=R_1\oplus R_2$, where $R_1$ and $R_2$ are finite unital noncommutative rings.  Then. ${\rm diam}(\Gamma(R))=3$.
\end{Lemma}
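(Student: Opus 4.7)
The plan is to combine the upper bound from Theorem~\ref{rediam} with an explicit construction of two vertices at distance exactly $3$. Since $R=R_1\oplus R_2$ is finite, left and right zero divisors coincide, so Theorem~\ref{rediam} immediately yields that $\Gamma(R)$ is connected with $\mathrm{diam}(\Gamma(R))\leq 3$; all the work lies in producing a matching lower bound.

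To choose suitable test vertices, I first note that each $R_i$ contains a nonzero zero divisor: by Wedderburn's little theorem a finite ring with no nonzero zero divisors is a field, but each $R_i$ is noncommutative. Fix then nonzero zero divisors $t\in R_1$ and $s\in R_2$. Finiteness forces $t$ to be a right zero divisor and $s$ a left zero divisor, so there exist $w_1\in R_1\setminus\{0\}$ with $w_1 t=0$ and $z_2\in R_2\setminus\{0\}$ with $s z_2=0$.

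My proposed distance-$3$ pair is $x=(1,s)$ and $y=(t,1)$; both are nonzero zero divisors of $R$ since, for instance, $x(0,z_2)=(0,sz_2)=0$ and $(w_1,0)y=(w_1 t,0)=0$. The argument then breaks into three verifications. First, $xy=(t,s)\neq 0$, so there is no edge $x\to y$. Second, if $(a,b)$ is any vertex with $x(a,b)=0$ and $(a,b)y=0$, then the first equation forces $a=0$ and $sb=0$ while the second forces $at=0$ and $b=0$, giving $(a,b)=(0,0)$ and ruling out length-$2$ paths. Third, the chain $x\to(0,z_2)\to(w_1,0)\to y$ is a legitimate directed path of length $3$, because each of the three products vanishes by construction.

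The main obstacle, in my view, is the length-$2$ exclusion: in the noncommutative setting one might fear that some asymmetric zero-divisor interaction could provide a shortcut. The reason the choice $x=(1,s)$, $y=(t,1)$ works is that the units sit in complementary coordinates, so each annihilation condition on a putative intermediate vertex forces a different coordinate to vanish. Once these steps are in place, $d(x,y)=3$ and therefore $\mathrm{diam}(\Gamma(R))=3$.
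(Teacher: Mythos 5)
Your proof is correct and takes essentially the same approach as the paper: both get the upper bound from Theorem~\ref{rediam} and the lower bound by testing a pair of vertices with a unit in one coordinate and a zero divisor in the other (the paper uses $(x,u_2)$ and $(u_1,y)$ for arbitrary units $u_i$, where you specialize to $u_1=u_2=1$), ruling out length-$1$ and length-$2$ connections identically. Your explicit length-$3$ path is a minor self-contained addition; the paper instead deduces $d=3$ from the connectedness guaranteed by Theorem~\ref{rediam}.
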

\begin{proof}
First note that, since  $R_1$ and $R_2$ are finite and noncommutaive if follows by the Wedderburn's little theorem that both $R_1$ and $R_2$ have nonzero zero divisors.

On the other hand, since $R_1$ and $R_2$ are rings with identity we can choose a unit $u_1$ from $R_1$ and a unit $u_2$ from $R_2$. Let
$x\in Z(R_1)^\star$ and $y\in Z(R_2)$ and consider the elements $(x, u_2), (u_1, y)\in Z(R)^\star$. We will prove that the distance between the vertices $(x, u_2)$ and  $(u_1, y)$ is 3. Indeed $(x, u_2)(u_1, y)=(xu_1, u_2y)\neq (0, 0)$. Hence $d((x, u_2),(u_1, y))>1$. On the other hand if $(a, b)\in Z(R)^\star$ satisfies
\begin{equation*}(x, u_2)(a, b)=(a, b)(u_1, y)=(0, 0),\end{equation*}
then we have $u_2b=0$ and $au_1=0$ implying $a=b=0$, a contradiction. Therefore, $d((x, u_2),(u_1, y))>2$. Finally, using theorem \ref{rediam} we get the result.$\Box$
\end{proof}

As a consequence, we have the following result.

\begin{Proposition}
Let $n$ be an integer divisible by at least two primes. Then,
$$\textrm{diam}(\Gamma(\mathbb{Z}_n[i,j,k]))= 3.$$
\end{Proposition}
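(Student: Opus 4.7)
The plan is to reduce the claim to Lemma \ref{l3} via the Chinese remainder decomposition (\ref{decom}). Since $n$ has at least two distinct prime factors, I can write $n = n_1 n_2$ with $\gcd(n_1, n_2) = 1$ and $n_1, n_2 > 1$, obtaining
\[
\mathbb{Z}_n[i,j,k] \cong \mathbb{Z}_{n_1}[i,j,k] \oplus \mathbb{Z}_{n_2}[i,j,k].
\]
Whenever both summands are noncommutative finite unital rings, Lemma \ref{l3} immediately yields $\mathrm{diam}(\Gamma(\mathbb{Z}_n[i,j,k])) = 3$, and since Theorem \ref{rediam} provides the matching upper bound $\mathrm{diam}(\Gamma(R))\leq 3$ the conclusion follows.

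The first step is therefore to choose $n_1, n_2$ so that both $\mathbb{Z}_{n_i}[i,j,k]$ are noncommutative. For any odd prime power $p^{\alpha}$, $\mathbb{Z}_{p^\alpha}[i,j,k]\cong M_2(\mathbb{Z}_{p^\alpha})$ is noncommutative, and $\mathbb{Z}_{2^t}[i,j,k]$ is noncommutative for $t\geq 2$ because $ij-ji = 2k \not\equiv 0 \pmod{2^t}$. Consequently, unless $n = 2p^{\alpha}$ with $p$ odd, I can always distribute the prime powers between $n_1$ and $n_2$ so that both factor rings are noncommutative, and Lemma \ref{l3} applies directly.

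The main obstacle is the residual case $n = 2p^{\alpha}$, because the CRT then forces the commutative summand $\mathbb{Z}_2[i,j,k]$ (in $\mathbb{Z}_2$ one has $-1 = 1$, so $ij=k=-k=ji$). To handle this I would mimic the proof of Lemma \ref{l3} directly, replacing the appeal to Wedderburn's theorem by an explicit nonzero zero divisor of $\mathbb{Z}_2[i,j,k]$: the element $1+i$ satisfies $(1+i)^2 = 2i = 0$. Taking $(x,u_2) := (1+i,1)$ and $(u_1,y):=(1,0)$ in $R := \mathbb{Z}_2[i,j,k]\oplus \mathbb{Z}_{p^\alpha}[i,j,k]$, both are nonzero zero divisors; their product $(1+i,0)$ is nonzero, so the distance between them exceeds $1$. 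Any potential intermediate vertex $(a,b)\in Z(R)^{*}$ with $(1+i,1)(a,b)=0=(a,b)(1,0)$ would force $b=0$ from $1\cdot b=0$ and $a=0$ from $a\cdot 1=0$, a contradiction. Hence their distance is at least $3$, and combined with Theorem \ref{rediam} this finishes the exceptional case as well.
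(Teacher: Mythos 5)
Your proof is correct and follows the same basic route as the paper: decompose $\mathbb{Z}_n[i,j,k]$ via the Chinese remainder theorem and invoke Lemma \ref{l3}. The difference is that you noticed a genuine subtlety the paper's one-line proof glosses over: Lemma \ref{l3} is stated for two \emph{noncommutative} summands, and when $n=2p^{\alpha}$ the decomposition forces the summand $\mathbb{Z}_2[i,j,k]$, which is commutative (in characteristic $2$ one has $ij=k=-k=ji$), so the lemma does not literally apply. Your patch is the right one: the proof of Lemma \ref{l3} only uses noncommutativity to produce a nonzero zero divisor in each summand via Wedderburn, so exhibiting $1+i$ with $(1+i)^2=2i=0$ in $\mathbb{Z}_2[i,j,k]$ and rerunning the two-step distance argument on the pair $(1+i,1)$, $(1,0)$ settles the exceptional case. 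Your treatment is therefore slightly more careful than the paper's, which simply cites the lemma; everything you wrote checks out.
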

\begin{proof}
It is enough to apply isomorphism (\ref{decom}) together with Lemma \ref{l3}.
\end{proof}

\begin{rem}
It is clear that $\textrm{diam}(\overline\Gamma(\mathbb{Z}_n[i,j,k]))\leq\textrm{diam}(\Gamma(\mathbb{Z}_n[i,j,k]))$. Now, if $n$ is divisible by at least two primes, there exist vertices in $\overline\Gamma(\mathbb{Z}_{n}[i,j,k])$ that are not at distance 2. Hence, we obtain the equality $\textrm{diam}(\overline\Gamma(\mathbb{Z}_n[i,j,k]))=\textrm{diam}(\Gamma(\mathbb{Z}_n[i,j,k]))$ in this case.
\end{rem}

Now, we must focus on the prime power case. We first look at the odd case, where the following technical lemma is useful \cite[Lem. 4.2; Cor. 4.1]{bo}.

\begin{Lemma}
\label{l1}
Let $R$ be a commutative ring. If every finite set of zero divisors from $R$ has a non-zero annihilator, then ${\rm diam}(\Gamma(M_n(R))=2$. In particular, if $F$ is a field, then ${\rm diam}(\Gamma(M_n(F))=2$.
\end{Lemma}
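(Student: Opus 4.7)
The plan is to establish both $\mathrm{diam}(\Gamma(M_n(R))) \leq 2$ and $\mathrm{diam}(\Gamma(M_n(R))) \geq 2$. The lower bound is straightforward: I would exhibit two distinct zero-divisor matrices with no directed edge between them in either orientation. For instance, for $n \geq 2$, take $A = E_{11}$ and $B = E_{11} + E_{12}$, where $E_{ij}$ denotes the standard matrix unit. Both are singular, hence zero divisors, while $AB = E_{11}+E_{12} \neq 0$ and $BA = E_{11} \neq 0$, so no directed edge joins them and $d(A,B) \geq 2$.

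For the upper bound, given distinct zero divisors $A, B \in M_n(R)$ with $AB \neq 0$, I would interpose a rank-one matrix. First I would produce nonzero vectors $u, w \in R^n$ with $Au = 0$ and $w^T B = 0$. Their existence is a consequence of McCoy's theorem together with the hypothesis: a matrix over a commutative ring is a zero divisor if and only if its determinant is a zero divisor, and the hypothesis that every finite set of zero divisors possesses a nonzero annihilator converts this into an actual kernel vector via the adjugate identity $A\cdot\mathrm{adj}(A) = \det(A)\,I$ (if $c\det(A)=0$ with $c\neq 0$, then $A \cdot (c\,\mathrm{adj}(A)) = 0$, and one selects a nonzero column; iterating on minors if necessary). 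Setting $C = u w^T$ then yields $AC = (Au)w^T = 0$ and $CB = u(w^T B) = 0$ at once, and $C$ is itself a zero divisor whenever it is nonzero, since for $n \geq 2$ its column module is a proper submodule of $R^n$. This gives a directed path $A \to C \to B$ of length $2$.

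The crux of the argument, and what I expect to be the main obstacle, is ensuring that $C \neq 0$, i.e., that at least one coordinate product $u_i w_j$ is nonzero. If every such product vanishes for a given choice of $u, w$, then each nonzero entry of $u$ is annihilated by every nonzero entry of $w$, exhibiting a finite set of zero divisors in $R$; the hypothesis then supplies a common nonzero annihilator $a \in R$, which can be substituted into a coordinate of $u$ (or $w$) while still maintaining membership in the respective kernel, to force the outer product to be nonzero. This is exactly where the full strength of the annihilator hypothesis is consumed. For the field case the issue evaporates entirely: every nonzero scalar is a unit, so $u \neq 0$ and $w \neq 0$ automatically give some $u_i w_j \neq 0$, and the construction goes through with no extra effort, yielding the ``in particular'' clause.
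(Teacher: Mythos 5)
The paper does not actually prove this lemma; it is imported from Bo\v{z}i\'c--Petrovi\'c \cite[Lem.~4.2, Cor.~4.1]{bo}, so your argument has to stand on its own. Your lower bound is fine, and your treatment of the field case is correct and standard: over a field the outer product of nonzero kernel vectors is automatically nonzero, which is all the ``in particular'' clause requires (and all the paper needs for $t=1$).

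The general case, however, has a genuine gap exactly at the step you yourself flag as the crux. First, ``substituting $a$ into a coordinate of $u$'' does not preserve $Au=0$: the kernel condition ties the coordinates of $u$ together, so altering one of them destroys it. Second, even setting that aside, the substitution cannot make the outer product nonzero, because the common annihilator $a$ you extract kills \emph{all} the entries of $w$ as well, so every product of $a$ with an entry of $w$ is still $0$. The degenerate case is real and cannot be repaired inside your framework: take $R=\mathbb{Z}_4$ (every finite set of zero divisors is annihilated by $2\neq0$, so the hypothesis holds), $A=\left(\begin{smallmatrix}1&0\\0&2\end{smallmatrix}\right)$, $B=\left(\begin{smallmatrix}2&0\\0&1\end{smallmatrix}\right)$. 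Then $AB\neq0$, the only nonzero $u$ with $Au=0$ is $(0,2)^T$, the only nonzero $w^T$ with $w^TB=0$ is $(2,0)$, and their outer product is identically zero. The lemma is still true here --- $C=\left(\begin{smallmatrix}0&0\\2&0\end{smallmatrix}\right)$ satisfies $AC=CB=0$ --- but this $C$ is \emph{not} an outer product of kernel vectors: it has the form $u'w^T$ with $Au'\neq0$ and only $(Au')w^T=0$. So in general the mediating vertex must be produced by a different mechanism (for instance relaxing $Au'=0$ to $(Au')w^T=0$, or working with adjugates and a common nonzero annihilator of $\det A$ and $\det B$); that is where the annihilator hypothesis actually has to be spent, and your proposal does not supply it.
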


\begin{Proposition}
Let $t\geq 1$ and let $p$ be an odd prime. Then,
$$\textrm{diam}(\Gamma(\mathbb{Z}_{p^t}[i,j,k]))=2.$$
\end{Proposition}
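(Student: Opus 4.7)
The plan is to reduce the statement to a known result about matrix rings and apply Lemma \ref{l1} directly.

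First, I would use the ring isomorphism $\mathbb{Z}_{p^t}[i,j,k] \cong M_2(\mathbb{Z}_{p^t})$ that holds for odd primes $p$, as recalled in the introduction. This isomorphism carries zero divisors to zero divisors and the product structure is preserved, so $\Gamma(\mathbb{Z}_{p^t}[i,j,k]) \cong \Gamma(M_2(\mathbb{Z}_{p^t}))$. It therefore suffices to establish that $\textrm{diam}(\Gamma(M_2(\mathbb{Z}_{p^t}))) = 2$.

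Next, I would verify the hypothesis of Lemma \ref{l1} for the commutative ring $R = \mathbb{Z}_{p^t}$. Explicitly, the set of zero divisors of $\mathbb{Z}_{p^t}$ is precisely the maximal ideal $(p) = \{kp : 0 \leq k < p^{t-1}\}$. For any finite subset $S \subset (p)$, the element $p^{t-1}$ is nonzero in $\mathbb{Z}_{p^t}$ (since $t \geq 1$) and satisfies $p^{t-1} \cdot (kp) = k p^t = 0$ for every $kp \in S$. Thus $p^{t-1}$ is a common nonzero annihilator for every finite set of zero divisors of $\mathbb{Z}_{p^t}$.

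Having verified the hypothesis, I would simply invoke Lemma \ref{l1} to conclude that $\textrm{diam}(\Gamma(M_2(\mathbb{Z}_{p^t}))) = 2$, and transport this equality back along the isomorphism to obtain $\textrm{diam}(\Gamma(\mathbb{Z}_{p^t}[i,j,k])) = 2$. There is essentially no obstacle here: the content of the proof is the matrix-ring isomorphism (established elsewhere) together with the Borges--Dolzan type lemma quoted as Lemma \ref{l1}; the only thing one must check by hand is the trivial observation that a common annihilator $p^{t-1}$ exists, which is a one-line calculation. A minor sanity check is that the diameter cannot drop below $2$: since $M_2(\mathbb{Z}_{p^t})$ is noncommutative, one can easily exhibit matrix units $E_{12}$ and $E_{21}$ with $E_{12} E_{21} \neq 0$, so $\Gamma$ is not complete and the diameter is exactly $2$ rather than $\leq 1$.
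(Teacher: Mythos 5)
Your proposal is correct and follows essentially the same route as the paper: transport via the isomorphism $\mathbb{Z}_{p^t}[i,j,k]\cong M_2(\mathbb{Z}_{p^t})$ and apply Lemma \ref{l1} after observing that $p^{t-1}$ annihilates every zero divisor of $\mathbb{Z}_{p^t}$. The only cosmetic difference is that the paper treats $t=1$ separately via the field case of the lemma, while you handle all $t\geq 1$ uniformly.
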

\begin{proof}
If $t=1$, $\mathbb Z_p$ is a field and the result follows from the second part Lemma \ref{l1}. Now assume that $t>1$. In this case the maximal ideal of the local ring $\mathbb Z_{p^t}$ is the principal ideal generated by $p$. This maximal ideal is nilpotent with index of nilpotence $t$. Therefore, the element $p^{t-1}$ belongs to the annihilator of every zero divisor in $\mathbb Z_{p^l}$ and Lemma \ref{l1} applies again.
\end{proof}

\begin{rem}
Again, $\textrm{diam}(\overline\Gamma(\mathbb{Z}_{p^t}[i,j,k]))\leq\textrm{diam}(\Gamma(\mathbb{Z}_{p^t}[i,j,k]))$. Now, if $p$ is an odd prime, there exist vertices in $\overline\Gamma(\mathbb{Z}_{p^t}[i,j,k])$ that are not at distance 1. Hence, we obtain the equality $\textrm{diam}(\overline\Gamma(\mathbb{Z}_{p^t}[i,j,k]))=\textrm{diam}(\Gamma(\mathbb{Z}_{p^t}[i,j,k]))$ in this case.
\end{rem}

Finally, we turn to the $p=2$ case. The following result computes the diameter of the undirected zero divisor graph.

\begin{Proposition}
\label{t2}
Let $t\geq 1$. Then $\textrm{diam}(\overline\Gamma(\mathbb Z_{2^t}[i, j , k]))=2$.
\end{Proposition}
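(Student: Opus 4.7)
The plan is to exhibit a single non-zero zero divisor that annihilates (on both sides) every other zero divisor, which immediately bounds the diameter by $2$, and then rule out diameter~$1$ by displaying two non-adjacent vertices.

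The candidate universal annihilator is $c := 2^{t-1}(1+i+j+k)$. First I would check that $c$ is itself a non-zero element of $Z(\mathbb{Z}_{2^t}[i,j,k])^\star$: its components are all $2^{t-1}\neq 0$ in $\mathbb{Z}_{2^t}$, while its norm equals $2^{2(t-1)}\cdot 4 = 2^{2t}\equiv 0\pmod{2^t}$.

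The key step is to show that $ac = ca = 0$ for every $a\in Z(\mathbb{Z}_{2^t}[i,j,k])$. Writing $a=x_0+x_1i+x_2j+x_3k$ and expanding directly, one computes
\begin{equation*}
a(1+i+j+k) = A_0 + A_1 i + A_2 j + A_3 k,
\end{equation*}
where each $A_r$ is a signed sum of $x_0,x_1,x_2,x_3$. A straightforward inspection of the four expressions shows $A_r \equiv x_0+x_1+x_2+x_3 \pmod 2$ for each $r$. By the description of the zero divisors from Proposition~\ref{t1} (they are exactly the elements whose norm, equivalently whose coordinate sum, is even), the congruence $x_0+x_1+x_2+x_3\equiv 0 \pmod 2$ holds for every zero divisor $a$. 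Hence $a(1+i+j+k) \in 2\,\mathbb{Z}_{2^t}[i,j,k]$, and multiplying by $2^{t-1}$ yields $ac=0$. The analogous computation on the left gives $ca=0$ (or one can invoke the fact that $\overline{1+i+j+k}=1-i-j-k$ has the same norm and the same coordinate-sum parity, so the symmetric calculation works).

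Consequently, for any two distinct non-zero zero divisors $a,b$, the path $a - c - b$ (with $c$ replaced by any suitable alternative in the degenerate cases $a=c$ or $b=c$, where adjacency is automatic) gives $\mathrm{d}(a,b)\le 2$. Finally, to see that the diameter is not~$1$, I would take $a=1+i$ and $b=1+j$: each has norm $2$, so both are non-zero zero divisors, yet $(1+i)(1+j)=1+i+j+k$ and $(1+j)(1+i)=1+i+j-k$ are both non-zero in $\mathbb{Z}_{2^t}[i,j,k]$, so these vertices are non-adjacent. Combining the two bounds gives $\mathrm{diam}(\overline\Gamma(\mathbb{Z}_{2^t}[i,j,k]))=2$. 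The only potentially delicate point is verifying the parity claim on all four components of $a(1+i+j+k)$; this is the calculation I would carry out most carefully, as the rest of the argument is formal once the universal annihilator $c$ is in hand.
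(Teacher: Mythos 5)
Your proof is correct, and it reaches the diameter bound the same way the paper does at the top level: exhibit a vertex adjacent to every other vertex, then rule out completeness with the non-adjacent pair $1+i$, $1+j$. The difference is in how the universal vertex is obtained. The paper argues abstractly: the norm identity shows that the zero divisors form an ideal, this ideal is the nilradical, and nilpotency of the nilradical guarantees (non-constructively, via a nonzero element of the last nonvanishing power) some vertex annihilating all zero divisors. You instead name the element $c=2^{t-1}(1+i+j+k)$ explicitly and verify by a parity computation that $ac=ca=0$ for every zero divisor $a$; your computation is right (each component of $a(1+i+j+k)$ is a signed sum of the $x_r$ congruent to $x_0+x_1+x_2+x_3\bmod 2$, which is even exactly when $\|a\|$ is even, i.e.\ when $a$ is a zero divisor). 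Your concrete version buys a little more: it simultaneously proves Theorem~\ref{dompot2}, that $\{2^{t-1}(1+i+j+k)\}$ is a dominating set, which the paper states separately with essentially your observation as its one-line proof. The paper's version is shorter here because it leans on the ideal/nilradical structure already set up for Proposition~\ref{t1}. One tiny presentational point: the characterization of zero divisors you invoke (coordinate sum even) is established inside the proof of Proposition~\ref{t1} rather than in its statement, so it is worth restating that equivalence explicitly before using it.
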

\begin{proof}
The graph $(\overline\Gamma(\mathbb Z_{2^t}[i, j , k])$ is not complete since the vertices $1+i$ and $1+j$  are not adjacent.
On the other hand, from equation (\ref{E1}) if follows that the sum of two zero divisors in $\mathbb{Z}_{p^t}[i,j,k]$ is a zero divisor. Hence, the set of all zero divisors is an ideal. Clearly, this ideal equals the nilradical $\mathcal N(\mathbb Z_{2^t}[i, j , k])$.
 Since the nilradical is nilpotent, it follows that there is a vertex of $\overline\Gamma(\mathbb Z_{2^t}[i, j , k])$ adjacent to all others, and thus $\textrm{diam}(\overline\Gamma(\mathbb Z_{2^t}[i, j , k]))=2$.
\end{proof}

Recall that a directed graph $G$ is called symmetric if, for every directed edge $x\rightarrow y$ that belongs to $G$, the corresponding reversed edge $y\rightarrow x$ also belongs to $G$. We are going to prove that the directed zero divisor graph $\Gamma(\mathbb Z_{2^t}[i,j,k])$ is symmetric. As a consequence, it will follow that $\textrm{diam}(\overline\Gamma(\mathbb{Z}_{2^t}[i,j,k]))=\textrm{diam}(\Gamma(\mathbb{Z}_{2^t}[i,j,k]))$.

A ring $R$ is called reversible \cite{co} if, for every $a,b\in R$, $ab=0$ implies that $ba=0$. Clearly, a ring $R$ is reversible if and only if its directed zero divisor graph $\Gamma(R)$ is symmetric. Thus, to prove that $\Gamma(\mathbb Z_{2^t}[i, j , k])$ is symmetric we will prove that $\mathbb{Z}_{2^t}[i,j,k]$ is reversible. To do so, we need a series of technical lemmata.

\begin{Lemma}
\label{L4}
Let $w\in \mathbb{Z}[i, j, k]$. If $\|w\|\equiv 0 \pmod{4}$, then either all the components of $w$ are even or all of them are odd.
\end{Lemma}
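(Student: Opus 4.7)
The approach is to analyze the parities of the components via the norm modulo $4$. Squares of integers behave very rigidly modulo $4$: an even square is $\equiv 0 \pmod 4$ and an odd square is $\equiv 1 \pmod 4$. So the residue of $\|w\| = w_0^2+w_1^2+w_2^2+w_3^2$ modulo $4$ is determined by the number $k$ of odd components among $w_0,w_1,w_2,w_3$, and equals $k \pmod 4$.

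Given the hypothesis $\|w\| \equiv 0 \pmod 4$, we immediately get $k \equiv 0 \pmod 4$. Since $0 \le k \le 4$, this forces either $k=0$ (all components even) or $k=4$ (all components odd), which is exactly the statement of the lemma. The two intermediate cases are ruled out directly: if exactly two components were odd, then $\|w\| \equiv 2 \pmod 4$, and if exactly one or three were odd, then $\|w\|$ would be odd, both contradicting $4 \mid \|w\|$.

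The only potential obstacle is making the parity bookkeeping transparent, but since it reduces to the two observations ``even$^2 \equiv 0 \pmod 4$'' and ``odd$^2 \equiv 1 \pmod 4$,'' the whole proof should be just a few lines. No further ring-theoretic machinery is needed; this is a purely elementary statement about integer quaternions.
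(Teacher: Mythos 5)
Your proof is correct and follows essentially the same approach as the paper: reduce each square modulo $4$ (even squares give $0$, odd squares give $1$), so $\|w\|\equiv k\pmod 4$ where $k$ is the number of odd components, forcing $k=0$ or $k=4$. In fact your write-up fixes a small slip in the paper's version, which defines $n_w$ as the number of \emph{even} components but then asserts $\|w\|\equiv n_w\pmod 4$, whereas the congruence holds for the count of odd components as you state.
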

\begin{proof}
Put $w=a_1+a_2i+a_3j+a_4k$ and denote by $n_w:=\textrm{card} \{i : a_i\ \textrm{is even}\}$. Since $a_i^2\equiv 0,1\pmod{4}$, it follows that $0\equiv||w||\equiv n_w\pmod{4}$ and hence the result.
\end{proof}

\begin{Lemma}
\label{L8}
Let $w\in \mathbb{Z}[i,j,k]$. If $\|w\|\equiv 0 \pmod {8}$, then all the components of $w$ are even.
\end{Lemma}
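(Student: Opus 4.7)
The plan is to bootstrap from Lemma \ref{L4}. Since $\|w\|\equiv 0\pmod 8$ certainly implies $\|w\|\equiv 0\pmod 4$, that lemma tells me that the components $a_1,a_2,a_3,a_4$ of $w$ are either all even or all odd, so it suffices to rule out the all-odd possibility.

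The key arithmetic fact I would use is that for any odd integer $m$, we have $m^2\equiv 1\pmod 8$. Writing $m=2\ell+1$, this follows from $m^2=4\ell(\ell+1)+1$ and the observation that $\ell(\ell+1)$ is always even. Applying this to each $a_i$ in the all-odd case would give
\begin{equation*}
\|w\|=a_1^2+a_2^2+a_3^2+a_4^2\equiv 1+1+1+1\equiv 4\pmod 8,
\end{equation*}
which contradicts the hypothesis $\|w\|\equiv 0\pmod 8$. Hence the all-odd alternative of Lemma \ref{L4} is impossible, and we are left with the conclusion that every component of $w$ is even.

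There is no real obstacle here: the whole argument is a one-step refinement of Lemma \ref{L4} using the elementary congruence $m^2\equiv 1\pmod 8$ for odd $m$. The only thing to be careful about is to invoke Lemma \ref{L4} explicitly so the dichotomy (all even vs.\ all odd) is available, rather than re-deriving it from scratch modulo $8$.
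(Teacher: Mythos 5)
Your proof is correct and follows essentially the same route as the paper: both invoke Lemma \ref{L4} to reduce to the all-odd case and then derive a contradiction modulo $8$, the paper by expanding $a_i=2a_i'+1$ and observing that $a_i'(a_i'+1)$ is even, you by citing the equivalent fact $m^2\equiv 1\pmod 8$ for odd $m$. Your packaging is slightly cleaner but the underlying argument is identical.
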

\begin{proof}
Put $w=a_1+a_2i+a_3j+a_4k$. Since $\|w\|=a_1^2+a_2^2+a_3^2+a_4^2\equiv 0\pmod{4}$ the previous lemma implies that either all the components of $w$ are even or all of them are odd. Assume that all of them are odd and put $a_i=2a_i'+1$ for every $i$. Then, $\|w\|=a_1^2+a_2^2+a_3^2+a_4^2=4((a_1')^2+a_1'+(a_2')^2+a_2'+(a_3')^2+a_3'+(a_4')^2+a_4)+4$. Hence, $((a_1')^2+a_1'+(a_2')^2+a_2'+(a_3')^2+a_3'+(a_4')^2+a_4')+1\equiv 0\pmod{2}$. This is clearly a contradiction and the result follows.
\end{proof}

\begin{Proposition}
\label{rev}
Let $w,z \in \mathbb{Z}[i,j,k]$. If $wz \equiv 0 \pmod {2^s} $, then $zw \equiv 0 \pmod {2^s}$. In other words, the ring $\mathbb Z_{2^t}[i, j,k]$ is reversible.
\end{Proposition}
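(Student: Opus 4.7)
The key identity is the quaternion commutator formula
\begin{equation*}
wz - zw = 2\bigl(\vec w \times \vec z\bigr),
\end{equation*}
where $\vec w$ and $\vec z$ denote the pure imaginary parts of $w$ and $z$, and the cross product is interpreted as a pure quaternion. Thus the claim $wz \equiv 0 \pmod{2^s} \Rightarrow zw \equiv 0 \pmod{2^s}$ reduces to showing $\vec w \times \vec z \equiv 0 \pmod{2^{s-1}}$, and I plan to prove this after first cutting the problem down to a short case analysis.

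The first step is to factor out common powers of 2: by writing $w = 2^k w_0$ and $z = 2^l z_0$ with $w_0, z_0$ each having at least one odd component, the statement for $(w, z, s)$ becomes the analogous statement for $(w_0, z_0, s-k-l)$. We may therefore assume that neither $w$ nor $z$ is divisible by 2 component-wise. Under this hypothesis, the multiplicativity of the norm gives $\|w\|\,\|z\| = \|wz\| \equiv 0 \pmod{4^{s}}$. Lemma~\ref{L8} forbids $\|w\| \equiv 0 \pmod 8$ (and analogously for $z$), while a direct parity check shows $v_2(\|w\|) \leq 2$ whenever $w$ has an odd component. Consequently $2s \leq 4$, so only $s \in \{0, 1, 2\}$ need to be treated.

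The cases $s = 0$ and $s = 1$ are immediate, since $2(\vec w \times \vec z) \in 2\mathbb{Z}[i,j,k]$ automatically, whence $wz \equiv zw \pmod 2$. For $s = 2$ the bound saturates: $v_2(\|w\|) = v_2(\|z\|) = 2$, and Lemma~\ref{L4} combined with the parity hypothesis forces all four components of $w$ and all four components of $z$ to be odd. Every entry $a_p b_q - a_q b_p$ of $\vec w \times \vec z$ is then a difference of two odd integers and hence even, so $\vec w \times \vec z \equiv 0 \pmod 2$ and therefore $2(\vec w \times \vec z) \equiv 0 \pmod 4$. The main obstacle is obtaining the sharp bound $s \leq 2$ from norm multiplicativity; this is where Lemmas~\ref{L8} and~\ref{L4} play their decisive role, and once it is in hand the commutator identity reduces the remaining verification to a one-line parity check.
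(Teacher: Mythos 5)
Your proof is correct. It rests on the same two lemmas (Lemmas~\ref{L4} and~\ref{L8}) and the same decisive parity computation as the paper, but it is organized differently: the paper proves the statement by induction on $s$, peeling off one factor of $2$ from $w$ or $z$ at each step whenever one of the norms is divisible by $8$, whereas you strip all powers of $2$ from $w$ and $z$ at the outset and then use multiplicativity of the norm together with Lemma~\ref{L8} to force $2s \le v_2(\|w\|)+v_2(\|z\|)\le 4$, so that only $s\le 2$ survives. This one-shot reduction eliminates the induction entirely and, in my view, is cleaner. A second improvement is that you make explicit the identity $wz-zw=2(\vec w\times\vec z)$; the paper's treatment of the all-components-odd case ends with the unproved assertion that ``it can be easily seen that $zw-wz\in 4\mathbb{Z}[i,j,k]$,'' and your observation that each entry $a_pb_q-a_qb_p$ of the cross product is a difference of odd numbers is exactly the missing one-line justification. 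Two trivial points you may wish to add for completeness: the case $w=0$ or $z=0$ should be dispatched separately (since then no odd component can be extracted, though the claim is vacuous there), and after the reduction the exponent $s-k-l$ may be nonpositive, which you implicitly fold into the ``$s=0$ is immediate'' case.
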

\begin{proof}
We will proceed by induction on $s$.

The case $s=1$ is obvious since $\mathbb{Z}[i, j, k]/ 2 \mathbb{Z}[i, j,k]$ is trivially commutative.

Let us consider $s=2$ and assume that $w z \equiv 0 \pmod {4}$. Hence, $\|w\|\|z\|=\|wz\|\equiv 0 \pmod {16}$. If $\|w\|\equiv 0\mod{8}$ we can apply Lemma \ref{L8} to conclude that $w=2w'$ for some $w'\in\mathbb{L}$. Hence (using the case $s=1$) we have,
$$w z \equiv 0 \pmod {4} \Leftrightarrow w'z\equiv 0\pmod{2} \Leftrightarrow zw'\equiv 0\pmod{2} \Leftrightarrow zw\equiv 0\pmod{4}.$$
the same holds if $\|z\|\equiv 0 \pmod{8}$. Finally, if both $\|w\|,\|z\|\equiv 0\pmod{4}$ we apply Lemma \ref{L4} to conclude that all the components of $w$ and $z$ are odd. But in this case it can be easily seen that $zw-wz\in 4\mathbb{Z}[i, j,k]$ and the result follows.

Now, assume that $s>2$ and that $w z \equiv 0 \pmod {2^s}$. In this case $\|w\|\|z\|=\|wz\|\equiv 0 \pmod {2^{2s}}$ and, since $s>2$ it follows that either $\|w\|\equiv 0 \pmod{8}$ or $\|z\|\equiv 0 \pmod{8}$. If, for instance, $\|w\|\equiv 0 \pmod{8}$ we apply Lemma \ref{L8} again to conclude that $w=2w'$ for some $w'\in\mathbb{Z}[i, j ,k]$ and we can proceed like in the previous paragraph. The same holds if $\|z\|\equiv 0\pmod{8}$ and the proof is complete.
\end{proof}

\begin{rem}
The concept of {\em symmetric} ring was defined by Lambek in \cite{SIM}: a ring $R$ is symmetric if, for every $a,b,c \in R$, $abc=0$ implies that also $acb=0$. It has sometimes been erroneously asserted (and even ``proved'') that reversible and symmetric are equivalent conditions. If a unital ring is symmetric, then it is also reversible. But this is no longer true for non-unital rings, as illustrated by an example of Birkenmeier \cite{BIR}. In the case of unital rings, the smallest known reversible non-symmetric ring was given in \cite{REV2}. Namely, it is the group algebra $\mathbb{F}_2 Q_8$ where $Q_8$is the quaternion group. In \cite{GUT} it was proved that this is in fact the smallest reversible group algebra over a field which is not symmetric. In \cite{LI} it was also confirmed that $\mathbb{F}_2 Q_8$ is indeed the smallest reversible group ring which is not symmetric. Note that $\mathbb{Z}_4[i, j, k]$ is a reversible ring due to Proposition \ref{rev} which is trivially non-symmetric. In fact, it is the smallest known ring with characteristic different from 2 with this property, having the same number of elements (256) as the aforementioned example $\mathbb{F}_2 Q_8$.
\end{rem}

The reversibility of $\mathbb Z_{2^t}[i, j,k]$ implies that $\textrm{diam}(\overline\Gamma(\mathbb{Z}_{2^t}[i,j,k]))=\textrm{diam}(\Gamma(\mathbb{Z}_{2^t}[i,j,k]))$, so from all the previous work we obtain the following result.

\begin{Theorem}
\label{diam}
Let $n$ be any integer. Then
$${\rm diam}(\overline{\Gamma}(\mathbb Z_n[i, j, k]))={\rm diam}(\Gamma(\mathbb Z_n[i, j, k]))=\begin{cases} 2, & \textrm{if $n$ is a prime power};\\ 3, & \textrm{otherwise}.\end{cases}$$
\end{Theorem}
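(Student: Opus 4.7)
The plan is to assemble the earlier propositions and remarks via a case analysis on $n$. Write $n=2^t p_1^{\alpha_1}\cdots p_k^{\alpha_k}$. The condition ``$n$ is a prime power'' splits into two subcases (odd prime power, and $n=2^t$), while ``otherwise'' corresponds to $n$ having at least two distinct prime divisors. In each case both equalities in the theorem already appear somewhere above, so the work is to glue them together cleanly.

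For the non-prime-power case, the proposition preceding this theorem (derived from Lemma~\ref{l3} applied to a nontrivial factor of the decomposition in (\ref{decom})) gives $\mathrm{diam}(\Gamma(\mathbb Z_n[i,j,k]))=3$. The remark that follows that proposition observes that the two vertices $(x,u_2)$ and $(u_1,y)$ used in Lemma~\ref{l3} are also not joined by an undirected path of length at most $2$, because the same annihilator argument rules out a common neighbour in $\overline\Gamma$. Together with the general bound $\mathrm{diam}(\overline\Gamma)\le\mathrm{diam}(\Gamma)$, this yields $\mathrm{diam}(\overline\Gamma(\mathbb Z_n[i,j,k]))=3$.

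For $n=p^t$ with $p$ odd, the isomorphism $\mathbb Z_{p^t}[i,j,k]\cong M_2(\mathbb Z_{p^t})$ and Lemma~\ref{l1} (applied as in the previous proposition, using $p^{t-1}$ as a common annihilator when $t>1$) give $\mathrm{diam}(\Gamma(\mathbb Z_{p^t}[i,j,k]))=2$. The accompanying remark notes that $\overline\Gamma(\mathbb Z_{p^t}[i,j,k])$ is not complete, so its diameter is at least $2$; combined with $\mathrm{diam}(\overline\Gamma)\le\mathrm{diam}(\Gamma)=2$, equality holds.

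The remaining case $n=2^t$ is the one requiring the reversibility machinery, since here $\mathbb Z_{2^t}[i,j,k]\not\cong M_2(\mathbb Z_{2^t})$. Proposition~\ref{t2} gives $\mathrm{diam}(\overline\Gamma(\mathbb Z_{2^t}[i,j,k]))=2$ directly. To pull this back to $\Gamma$, invoke Proposition~\ref{rev}: since $\mathbb Z_{2^t}[i,j,k]$ is reversible, the directed zero-divisor graph is symmetric, which means every undirected edge is present in both orientations, so lengths of shortest paths in $\Gamma$ and in $\overline\Gamma$ coincide and $\mathrm{diam}(\Gamma(\mathbb Z_{2^t}[i,j,k]))=2$ as well. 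The only subtle step is ensuring directed and undirected diameters match in every case; in the first two cases this comes from the non-trivial ``lower bound'' remarks, and in the last from reversibility, so no single step is technically demanding once the earlier results are in hand.
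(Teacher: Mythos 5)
Your proposal is correct and follows essentially the same route as the paper, which proves this theorem simply by assembling the preceding propositions and remarks (the non-prime-power case from Lemma~\ref{l3}, the odd prime power case from Lemma~\ref{l1}, and the $n=2^t$ case from Proposition~\ref{t2} together with the reversibility of $\mathbb Z_{2^t}[i,j,k]$ from Proposition~\ref{rev}). Your added justification that the annihilator argument of Lemma~\ref{l3} also rules out a common neighbour in the undirected graph is a welcome explicit check of a point the paper's remark only asserts.
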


Recall that a graph $G$ is {\emph complete} provided every pair of distinct vertices is connected by a unique edge. In \cite[Theorem 15]{ab1} it was proved that the undirected zero divisor graph for the ring of Gaussian integers modulo $n$, $\overline{\Gamma}(\mathbb{Z}_n[i])$, is complete if and only if $n=q^2$, where $q$ is a rational prime such $q\equiv 3\pmod{4}$. In our case we have the following.

\begin{Corollary}
The graph $\overline{\Gamma}(\mathbb Z_n[i, j, k])$ is never complete.
\end{Corollary}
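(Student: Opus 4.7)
The plan is to derive non-completeness as an immediate consequence of Theorem \ref{diam}. A complete graph on at least two vertices has diameter exactly $1$, so it suffices to show that $\overline{\Gamma}(\mathbb{Z}_n[i,j,k])$ has at least two vertices and diameter strictly greater than $1$. The vertex counts proved in Section 2 (for instance, $2^{4t-1}-1 \geq 7$ when $n = 2^t$, and Proposition 1 gives $p^3 + p^2 - p - 1 \geq 8$ when $p$ is odd) show there are always many vertices, and Theorem \ref{diam} gives diameter $2$ or $3$ in every case. Hence $\overline{\Gamma}(\mathbb{Z}_n[i,j,k])$ is never complete.

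For an explicit rather than abstract argument, the plan is a three-case split exhibiting two non-adjacent zero divisors in each case. If $n$ is divisible by at least two primes, the pair $((x,u_2),(u_1,y))$ constructed inside the proof of Lemma \ref{l3}, transported through the isomorphism (\ref{decom}), is a pair of non-zero zero divisors at distance $3$, so in particular non-adjacent. If $n = p^t$ with $p$ odd, use the isomorphism $\mathbb{Z}_{p^t}[i,j,k] \cong M_2(\mathbb{Z}_{p^t})$ and take the matrix units-like pair $A = E_{11}$ and $B = E_{11} + E_{12}$: both have determinant $0$ hence are zero divisors, while $AB = B \neq 0$ and $BA = A \neq 0$. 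If $n = 2^t$, take the pair $x = 1+i$ and $y = 1+j$ already singled out in the proof of Proposition \ref{t2}: both have norm $2$, hence lie in the (nilpotent) set of zero divisors, while
\[
xy = 1 + i + j + k, \qquad yx = 1 + i + j - k,
\]
neither of which is zero in $\mathbb{Z}_{2^t}[i,j,k]$ since all four components are units in $\mathbb{Z}_{2^t}$.

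There is no real obstacle, since all the difficult work is already contained in Theorem \ref{diam}; the corollary is essentially a restatement. The only thing to verify with a bit of care is that the exhibited products in the $p=2$ case are genuinely non-zero in $\mathbb{Z}_{2^t}[i,j,k]$, and this is immediate from the quaternion multiplication rules and the fact that $\pm 1 \not\equiv 0 \pmod{2^t}$.
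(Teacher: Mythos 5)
Your first paragraph is exactly the paper's argument: a complete graph has diameter $1$, and Theorem \ref{diam} forces diameter $2$ or $3$, so the graph is never complete. The additional explicit exhibitions of non-adjacent zero divisors in each case are correct (the products $E_{11}(E_{11}+E_{12})$, $(1+i)(1+j)$, etc.\ check out) but redundant given that the diameter theorem already does all the work.
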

\begin{proof}
The diameter of a complete graph is 1. Since this is not possible due to Theorem \ref{diam}, the result follows.
\end{proof}

\section{The girth}
A {\it cycle} in a graph is a path starting and ending at the same vertex. The {\it girth} of $G$, denoted by $\textrm{g}(G)$, is the length of the shortest cycle contained in $G$. If the graph does not contain any cycle, its girth is defined to be infinity. All the previous concepts can be defined for directed graphs just considering directed paths.

Let us consider the directed zero divisor graph $\Gamma(\mathbb{Z}_{n}[i,j,k])$. If $n$ is even then $g(\Gamma(\mathbb Z_n[i, j, k]))=2$, since
\begin{equation*}\left[\begin{array}{cc}1&0\\0&0\end{array}\right]\left[\begin{array}{cc}0&0\\0&1
\end{array}\right]=\left[\begin{array}{cc}0&0\\0&1\end{array}\right]\left[\begin{array}{cc}1&0\\0&0\end{array}\right]=
\left[\begin{array}{cc}0&0\\0&0\end{array}\right].\end{equation*}
For $n=2^t$ the ring $\mathbb Z_{2^t}[i, j, k]$ is reversible and consequently $g(\Gamma(\mathbb Z_n[i, j, k]))=2$.

Now, we turn to the undirected case. It is clear that $\overline\Gamma(\mathbb{Z}_{n}[i,j,k])$ is a simple graph; i.e., it does not contain loops and two vertices are not connected by more that one edge. Thus, it follows that $g(\overline\Gamma(\mathbb Z_{2^t}[i, j , k]))\geq3$. We will now see that equality holds.

To compute the girth of $\overline\Gamma(\mathbb Z_n[i, j, k])$ for odd $n$, we recall the following result \cite[Prop. 3.2]{bo}

\begin{Proposition}
Let $R$ be a commutative ring. Then $\textrm{g}(\overline\Gamma(M_n(R)))=3$.
\end{Proposition}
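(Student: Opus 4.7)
The plan is to prove both inequalities separately, the lower bound being essentially free and the upper bound following from an explicit triangle built out of matrix units.

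First I would observe that $\overline{\Gamma}(M_n(R))$ is, by definition, a simple graph: only distinct vertices are joined, and the adjacency condition ($xy=0$ or $yx=0$) either holds or does not. Consequently any cycle must have length at least $3$, which immediately gives $\mathrm{g}(\overline{\Gamma}(M_n(R))) \geq 3$.

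For the matching upper bound $\mathrm{g}(\overline{\Gamma}(M_n(R))) \leq 3$ I would exhibit an explicit $3$-cycle. Assuming (as is implicit in the statement) that $n \geq 2$, take the three matrix units
\begin{equation*}
A = E_{11}, \qquad B = E_{22}, \qquad C = E_{21}.
\end{equation*}
These are three pairwise distinct non-zero elements of $M_n(R)$. Using the standard relation $E_{ij}E_{kl} = \delta_{jk} E_{il}$ one immediately computes
\begin{equation*}
AB = E_{11}E_{22} = 0, \qquad AC = E_{11}E_{21} = 0, \qquad CB = E_{21}E_{22} = 0,
\end{equation*}
so each of $A,B,C$ is in particular a non-zero zero divisor of $M_n(R)$ (hence a vertex of $\overline{\Gamma}(M_n(R))$), and each of the three pairs $\{A,B\}$, $\{A,C\}$, $\{B,C\}$ is joined by an edge. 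Thus $A$--$B$--$C$--$A$ is a triangle, which yields $\mathrm{g}(\overline{\Gamma}(M_n(R))) \leq 3$ and closes the proof.

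The main (and really only) subtlety is the implicit assumption $n \geq 2$: for $n = 1$ the ring $M_1(R)=R$ is commutative and the conclusion would depend on $R$. Under this standing convention no genuine obstacle arises, since the argument is just a direct matrix-unit computation, valid for every commutative ring $R$ with no further hypothesis (no finiteness, locality, or restriction on $Z(R)$).
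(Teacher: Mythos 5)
Your argument is correct. The three matrix units are pairwise distinct non-zero zero divisors (provided $1\neq 0$ in $R$ and $n\geq 2$), and the computations $E_{11}E_{22}=E_{11}E_{21}=E_{21}E_{22}=0$ do produce a triangle, so together with the trivial lower bound for simple graphs you get girth exactly $3$. Note, however, that the paper does not prove this proposition at all: it is quoted verbatim from Bo\v{z}i\'c and Petrovi\'c \cite[Prop.~3.2]{bo} and then applied to $M_2(\mathbb{Z}_n)$ for odd $n$. So there is no ``paper proof'' to compare against; your contribution is a short, self-contained, elementary verification that replaces the citation. The two caveats you should keep in mind are exactly the ones you partly flagged: the statement needs $n\geq 2$ (for $n=1$ the conclusion depends on $R$, e.g.\ $\overline\Gamma(\mathbb{Z}_4)$ is a single vertex) and it also tacitly needs $R\neq\{0\}$ so that the $E_{ij}$ are genuinely distinct and non-zero. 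Neither restriction matters for the application in the paper, where $n=2$ and $R=\mathbb{Z}_m$ with $m>1$.
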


This proposition clearly implies that ${\rm g}(\overline\Gamma(\mathbb Z_{n}[i, j, k])=3$ because, for odd $n$, we have that $\mathbb Z_{n}[i, j, k]\cong M_2(\mathbb{Z}_n)$.

The case $n=2^t$ is analyzed in the following result.

\begin{Proposition}
Let $t\geq 1$. Then ${\rm g}(\overline\Gamma(\mathbb Z_{2^t}[i, j , k]))=3$.
\end{Proposition}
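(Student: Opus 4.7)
The plan is to exhibit an explicit triangle. Since $\overline\Gamma(\mathbb Z_{2^t}[i,j,k])$ is simple by definition, we already have $g(\overline\Gamma(\mathbb Z_{2^t}[i,j,k]))\geq 3$, so only the upper bound needs work. I would consider the three candidate vertices
$$a=2^{t-1}(1+i),\qquad b=2^{t-1}(1+i+j+k),\qquad c=2^{t-1}(j+k).$$
These are visibly nonzero and pairwise distinct in $\mathbb Z_{2^t}[i,j,k]$: the real parts of $a$ and $b$ equal $2^{t-1}\not\equiv 0\pmod{2^t}$, while $a-b=-2^{t-1}(j+k)$, $a-c=2^{t-1}(1+i-j-k)$ and $b-c=2^{t-1}(1+i)$ are all nonzero by inspection.

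Next I would do the three product computations in $\mathbb Z[i,j,k]$ before reducing modulo $2^t$. Using $(1+i)^2=2i$, $(1+i)(j+k)=j+k+k-j=2k$ and $(j+k)^2=-1+i-i-1=-2$, one finds
\begin{align*}
ab &= 2^{2t-2}\bigl((1+i)^2+(1+i)(j+k)\bigr)=2^{2t-1}(i+k),\\
ac &= 2^{2t-2}(1+i)(j+k)=2^{2t-1}k,\\
bc &= 2^{2t-2}\bigl((1+i)(j+k)+(j+k)^2\bigr)=2^{2t-1}(k-1).
\end{align*}
Since $2t-1\geq t$ for every $t\geq 1$, all three products vanish modulo $2^t$, so $a,b,c$ are nonzero zero divisors and $\{a,b,c\}$ forms a triangle in $\overline\Gamma(\mathbb Z_{2^t}[i,j,k])$. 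Combined with the lower bound, this gives $g(\overline\Gamma(\mathbb Z_{2^t}[i,j,k]))=3$.

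There is no real obstacle here; the only point that requires a little care is picking elements whose pairwise products in $\mathbb Z[i,j,k]$ already lie in $2\mathbb Z[i,j,k]$, so that the extra factor $2^{2t-2}$ coming from the scalar $2^{t-1}$ is enough to guarantee divisibility by $2^t$ even in the critical case $t=1$ (where $a=1+i$, $b=1+i+j+k$, $c=j+k$ already work directly, since $(1+i)^2$, $(1+i)(j+k)$ and $(j+k)^2$ all reduce to $0$ mod $2$).
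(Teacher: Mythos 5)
Your proof is correct and takes essentially the same approach as the paper, namely exhibiting an explicit $3$-cycle after noting the graph is simple; the computations $(1+i)^2=2i$, $(1+i)(j+k)=2k$, $(j+k)^2=-2$ and the divisibility $2^{2t-1}\equiv 0\pmod{2^t}$ all check out. The only difference is that your single uniform triangle $\{2^{t-1}(1+i),\,2^{t-1}(1+i+j+k),\,2^{t-1}(j+k)\}$ replaces the paper's case split into $t=1$, $t=2$ and $t>2$, which is a mild simplification.
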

\begin{proof}
If $t=1$, we have the cycle (see the previous remark) $(1+i)$---$(j+k)$---$(1+i+j+k)$---$(1+i)$, for instance.
If $t=2$, we have the cycle $2$---$2i$---$(2+2i)$---$2$, for instance. Finally, if $t>2$ we can consider the cycle $2^{t-1}$---$2$---$2^{t-1}i$---$2^{t-1}$. This proves the result.
\end{proof}

Finally, if we recall isomorphism (\ref{decom}), the previous discussion leads to the following.

\begin{Theorem}
\label{gir}
For every integer $n$, ${\rm g}(\overline{\Gamma}(\mathbb Z_n[i, j, k]))=3$.
\end{Theorem}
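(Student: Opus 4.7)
The plan is to reduce the statement to the prime-power case, which is already settled by the two propositions immediately preceding the theorem, and then bootstrap to arbitrary $n$ via the Chinese Remainder Theorem decomposition (\ref{decom}). Concretely, I will write the prime-power factorization $n=p_1^{\alpha_1}\cdots p_k^{\alpha_k}$ and split into two cases depending on whether $k=1$ or $k\geq 2$.

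If $k=1$, the statement follows at once from what has just been proved: for odd prime powers one combines the isomorphism $\mathbb{Z}_{p^\alpha}[i,j,k]\cong M_2(\mathbb{Z}_{p^\alpha})$ with the cited matrix-ring result, and for $n=2^t$ the explicit triangles exhibited in the preceding proposition handle the case.

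If $k\geq 2$, I choose any coprime factorization $n=m\cdot m'$ with $m,m'>1$ (for instance $m=p_1^{\alpha_1}$ and $m'=n/m$), so that (\ref{decom}) yields $\mathbb{Z}_n[i,j,k]\cong \mathbb{Z}_m[i,j,k]\oplus \mathbb{Z}_{m'}[i,j,k]$. By the prime-power case just recalled (applied to $m$), there is a triangle in $\overline{\Gamma}(\mathbb{Z}_m[i,j,k])$ with three distinct vertices $x,y,z$, and I lift it to the direct sum by padding each vertex with a zero second component. The three lifted elements $(x,0),(y,0),(z,0)$ are distinct, nonzero, and annihilated (on either side) by $(0,1)$, hence are bona fide nonzero zero divisors of the direct sum; since each defining product relation $xy=0$, $yz=0$, $zx=0$ (or its reverse) persists componentwise, these three elements form a triangle in $\overline{\Gamma}(\mathbb{Z}_n[i,j,k])$.

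Combined with the trivial observation that $\overline{\Gamma}(\mathbb{Z}_n[i,j,k])$ is a simple graph without loops or multiple edges, and therefore has girth at least $3$, this yields the claimed equality. I do not anticipate any genuine obstacle: the only point that warrants a moment's thought is verifying that lifting a triangle by padding with zeros indeed produces distinct nonzero zero divisors in the larger ring, which is immediate from the vanishing of the second component.
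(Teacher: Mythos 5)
Your proposal is correct and follows essentially the same route as the paper: girth at least $3$ because the graph is simple, triangles in the prime-power cases from the two preceding propositions, and the Chinese Remainder Theorem decomposition (\ref{decom}) to cover composite $n$. The only cosmetic difference is that the paper treats all odd $n$ at once via $\mathbb Z_n[i,j,k]\cong M_2(\mathbb Z_n)$, whereas you reduce to prime powers and make the zero-padding lift of a triangle explicit --- a step the paper leaves implicit.
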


A graph $G$ is {\emph complete bipartite} if its vertices can be partitioned into two subsets such that no edge has both endpoints in the same subset, and every possible edge that could connect vertices in different subsets is part of the graph. In \cite[Theorem 17]{ab1} it was proved that the undirected zero divisor graph for the ring of Gaussian integers modulo $n$, $\overline{\Gamma}(\mathbb{Z}_n[i])$, is complete bipartite if and only if $n=p^2$, where $p$ is a rational prime such $p\equiv 1\pmod{4}$ or $n=q_1q_2$, with $q_1,q_2$ rational primes such that $q_1\equiv q_2\equiv 3\pmod{4}$. In our case we have the following.

\begin{Corollary}
The graph $\overline{\Gamma}(\mathbb Z_n[i, j, k])$ is never complete bipartite.
\end{Corollary}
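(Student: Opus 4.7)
The plan is to use Theorem \ref{gir} directly, together with the elementary fact that complete bipartite graphs are bipartite and hence triangle-free. Specifically, I would argue as follows.

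First I would recall that any bipartite graph contains no odd cycles, since an edge always connects the two parts and so any closed walk must traverse an even number of edges. In particular, a complete bipartite graph cannot contain a $3$-cycle, so its girth is at least $4$ (and is $\infty$ precisely in the degenerate star case $K_{1,m}$, where it contains no cycles at all).

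On the other hand, Theorem \ref{gir} asserts that $\mathrm{g}(\overline{\Gamma}(\mathbb Z_n[i,j,k])) = 3$ for every integer $n$. This means that $\overline{\Gamma}(\mathbb Z_n[i,j,k])$ contains a triangle and so cannot be bipartite, let alone complete bipartite. Combining the two observations gives the contradiction that rules out complete bipartiteness for every $n$.

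There is no real obstacle here: the entire content of the corollary is already packaged in Theorem \ref{gir}, and the only step is the standard observation that a complete bipartite graph is triangle-free. The proof is essentially a one-liner.
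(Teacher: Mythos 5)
Your proof is correct and follows essentially the same route as the paper: both deduce the result by contrasting the girth $3$ from Theorem \ref{gir} with the fact that a complete bipartite graph has girth at least $4$ (your treatment of the degenerate star case $K_{1,m}$ is in fact slightly more careful than the paper's).
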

\begin{proof}
The girth of a complete bipartite graph is 4. Since this is not possible due to Theorem \ref{gir}, the result follows.
\end{proof}

\section{The domination number}

A {\emph dominating set} for a graph $G$ is a subset of vertices $D$, such that every vertex not in $D$ is adjacent to at least one member of $D$. The {\em domination number}, denoted by $\gamma(G)$, is the number of vertices in a minimal dominating set.

The problem of determining the dominating number of an arbitrary graph is  NP-complete \cite{ga}. Nevertheless, particular cases have been recently studied. In \cite{ab1}, for instance, the dominating number of the zero divisor graph of the ring of Gaussian integers modulo $n$ was studied. In particular, the authors characterized the values of $n$ for which the domination number of $\Gamma(\mathbb Z_n[i])$ is $1$ or $2$.

This section is devoted to study the domination number of the undirected zero divisor graph $\overline\Gamma(\mathbb Z_{n}[i, j , k])$. The easiest case arises when $n$ is a power of 2.

\begin{Theorem}  \label{dompot2}
The domination number of the undirected zero divisor graph $\overline\Gamma(\mathbb Z_{2^t}[i, j , k])$ is $1$ for every $t\geq 1$.
\end{Theorem}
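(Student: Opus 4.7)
The plan is to exhibit a single vertex of $\overline\Gamma(\mathbb{Z}_{2^t}[i,j,k])$ that is adjacent to every other vertex. This proves $\gamma \leq 1$, and since any nonempty graph has domination number at least $1$, equality follows.

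The key observation, already used in Proposition~\ref{t2}, is that the set of zero divisors of $R := \mathbb{Z}_{2^t}[i,j,k]$ coincides with the nilradical $\mathcal{N} := \mathcal{N}(R)$, a nilpotent ideal. Let $m$ be the nilpotency index, so that $\mathcal{N}^m = 0$ but $\mathcal{N}^{m-1} \neq 0$. Because $R$ has non-zero zero divisors for every $t \geq 1$ (for instance, $1+i \in \mathcal{N}$ since $\|1+i\| = 2$), we have $\mathcal{N} \neq 0$ and thus $m \geq 2$. I would then pick any nonzero $x \in \mathcal{N}^{m-1}$; since $m \geq 2$, such an $x$ lies in $\mathcal{N}$ and is a vertex of $\overline\Gamma(R)$. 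For any other vertex $y \in \mathcal{N}$ one has $xy \in \mathcal{N}^{m-1}\cdot\mathcal{N} \subseteq \mathcal{N}^m = 0$, and similarly $yx = 0$, so $x$ is adjacent to $y$. Therefore $\{x\}$ is a dominating set.

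If a concrete witness is preferable, the element $x = 2^{t-1}(1+i+j+k)$ works. A short expansion shows that for $y = a+bi+cj+dk$, each of the four components of $(1+i+j+k)y$ is congruent to $a+b+c+d \pmod 2$; since $y \in \mathcal{N}$ means $\|y\| = a^2+b^2+c^2+d^2 \equiv a+b+c+d \pmod 2$ is even, every component of $(1+i+j+k)y$ is even, and multiplying by $2^{t-1}$ gives $xy \equiv 0 \pmod{2^t}$. Reversibility (Proposition~\ref{rev}) then yields $yx = 0$ as well.

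The only subtlety is confirming that $\mathcal{N}^{m-1}$ contains a nonzero element which is itself a nonzero zero divisor; both are immediate from the definition of the nilpotency index combined with $m \geq 2$. In spirit, the theorem is a restatement of the final claim made during the proof of Proposition~\ref{t2}, now phrased in the language of domination.
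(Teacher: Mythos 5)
Your proposal is correct and follows essentially the same route as the paper, which also exhibits the single dominating vertex $2^{t-1}(1+i+j+k)$; your component-by-component check (each coordinate of $(1+i+j+k)y$ being congruent to $a+b+c+d\pmod 2$, which is even precisely when $y$ is a zero divisor) supplies the verification the paper leaves as an assertion. The preliminary nilradical argument is a clean abstract packaging of the same idea already used in Proposition~\ref{t2} and is also valid.
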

\begin{proof}
Just observe that $\{2^{t-1}(1+i+j+k)\}$ is a dominating set of the graph $\overline\Gamma(\mathbb Z_{2^s}[i, j , k])$ because $2^{s-1}(1+i+j+k)z=0$ for every non-zero zero divisor $z$.
\end{proof}

The rest of the section will be devoted to study de case when $n$ is an odd prime. In particular we will prove that, for an odd prime number $p$, the domination number of $\overline\Gamma(\mathbb Z_p[i, j , k])$  is $p+1$.

Let $\mathcal{U}(M_2(\mathbb Z_p))$ be the group of units in the matrix ring $M_2(\mathbb Z_p)$. We can consider a natural left group action on the vertices of $\overline\Gamma(M_2(\mathbb Z_p))$ defined by $(U, X)\rightarrow UX$ from $\mathcal{U}(M_2(\mathbb Z_p))\times Z(M_2(\mathbb Z_p))^*$ to $Z(M_2(\mathbb Z_p))^*$.

In the following lemma we characterize the structure of the orbits under the natural left action.

\begin{Lemma}\label{lso}
The distinct orbits of  the regular left action on $Z^*(M_2(\mathbb Z_p))$  are
\begin{equation*}o\left(\left[\begin{array}{cc}1&a\\0&0\end{array}\right]\right),\;\;\;for\;\;some\;\;a\in\mathbb Z_p,
\;\;and \;\;o\left(\left[\begin{array}{cc}0&0\\0&1\end{array}\right]\right).\end{equation*}
\end{Lemma}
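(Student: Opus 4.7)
The plan is to exploit the fact that \(\mathbb{Z}_p\) is a field, so \(M_2(\mathbb{Z}_p)\) is the \(2 \times 2\) matrix algebra over a field; its nonzero zero divisors are precisely the rank-\(1\) matrices, and \(\mathcal{U}(M_2(\mathbb{Z}_p)) = \mathrm{GL}_2(\mathbb{Z}_p)\). The key observation is that the left action by \(\mathrm{GL}_2(\mathbb{Z}_p)\) is exactly the operation of performing invertible row operations, whose natural invariant is the row space.

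First I would establish that the row space is an orbit invariant: for any \(U \in \mathrm{GL}_2(\mathbb{Z}_p)\) and any matrix \(X\), the row space of \(UX\) equals that of \(X\), because the rows of \(UX\) are \(U\)-linear combinations of the rows of \(X\) and invertibility of \(U\) gives the reverse inclusion. Conversely, I would show that two nonzero rank-\(1\) matrices \(X, X'\) sharing a row space lie in the same orbit. Writing \(X = c\,v\) and \(X' = c'\,v\), where \(c, c'\) are nonzero column vectors and \(v\) is a common nonzero row generator of the shared line, the equation \(UX = X'\) reduces to \(Uc = c'\); such a \(U \in \mathrm{GL}_2(\mathbb{Z}_p)\) exists because any two nonzero vectors of \(\mathbb{Z}_p^2\) extend to bases, and the corresponding change of basis is invertible.

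Finally, I would enumerate the orbits by listing the \(1\)-dimensional subspaces of \(\mathbb{Z}_p^2\): there are exactly \(p+1\) such lines, with generators \((1, a)\) for \(a \in \mathbb{Z}_p\) together with \((0, 1)\). The representatives
\[
\left[\begin{array}{cc}1 & a \\ 0 & 0\end{array}\right] \quad \text{and} \quad \left[\begin{array}{cc}0 & 0 \\ 0 & 1\end{array}\right]
\]
have precisely these row spaces, so they realize distinct orbits and exhaust them, giving the \(p+1\) orbits claimed. The main (and only mildly subtle) step is the converse direction above, which is essentially reduced row echelon reduction adapted to rank-\(1\) matrices; the rest is a straightforward invariance-and-counting argument.
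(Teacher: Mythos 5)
Your proof is correct, and it refines the paper's argument in a worthwhile way. The paper's own proof is a direct row-reduction case analysis: it observes that left multiplication by units of $M_2(\mathbb{Z}_p)$ realizes invertible row operations, reduces an arbitrary nonzero singular matrix to upper triangular form, and then splits into cases ($\alpha\neq 0$; $\alpha=0,\gamma\neq 0$; $\alpha=\gamma=0,\beta\neq 0$) to land on one of the listed representatives. That argument establishes only that the listed orbits \emph{exhaust} $Z^*(M_2(\mathbb{Z}_p))$; it never checks that the $p+1$ representatives lie in pairwise distinct orbits, which is what the word ``distinct'' in the statement requires (and what Theorem \ref{domop} later uses, since the count $p+1$ matters). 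Your route supplies exactly this missing half: the row space is a complete invariant for the left $\mathrm{GL}_2$-action on rank-one matrices, so orbits are in bijection with the $p+1$ lines of $\mathbb{Z}_p^2$, and the given representatives have visibly different row spaces. Your converse step (same row space implies same orbit, via the factorization $X=cv$, $X'=c'v$ and transitivity of $\mathrm{GL}_2(\mathbb{Z}_p)$ on nonzero column vectors) is sound, since $w v=0$ with $v$ a nonzero row forces $w=0$, so $UX=X'$ is indeed equivalent to $Uc=c'$. In short: same underlying mechanism (row operations), but your invariant-based formulation proves both exhaustion and distinctness, whereas the paper proves only the former.
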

\begin{proof}
Let $A$ be an element of $ Z(M_2(\mathbb Z_p))^*$. As is well-known from linear algebra performing a row operation on a matrix, with entries in a field,  is equivalent to multiplying on the left by a suitable invertible matrix. Since by row operations we can reduce any matrix to an upper triangular matrix it follows that there is an invertible matrix $U\in M_2(\mathbb Z_p)$ suct that
\begin{equation*}UA=\left[\begin{array}{cc}\alpha&\beta\\0&\gamma\end{array}\right],\end{equation*}
for some $\alpha, \beta, \gamma\in\mathbb Z_p$.

Assume first that $\alpha\neq 0$. Since $A$ is singular it follows that $\gamma =0$. So, we have
\begin{equation*}\left[\begin{array}{cc}\alpha^{-1}&0\\0&\alpha^{-1}\end{array}\right]\left[\begin{array}{cc}\alpha&\beta\\0&0\end{array}\right]=
\left[\begin{array}{cc}1&\alpha^{-1}\beta\\0&0\end{array}\right],\end{equation*}
and we get the result.

Now assume that  $\alpha=0$.  If $\gamma\neq 0$, then we have
\begin{equation*}\left[\begin{array}{cc}\gamma^{-1}&-\gamma^{-1}\beta\\0&\gamma^{-1}\end{array}\right]\left[\begin{array}{cc}0&\beta\\0&\gamma\end{array}\right]=
\left[\begin{array}{cc}0&0\\0&1\end{array}\right],\end{equation*}
as desired. If $\gamma=0$, since $A$ is not the nul matrix it follows that  $\beta\neq 0$. Hence we have
\begin{equation*}\left[\begin{array}{cc}\beta^{-1}\gamma&\beta^{-1}\\ \beta^{-1}&0\end{array}\right]\left[\begin{array}{cc}0&\beta\\0&\gamma\end{array}\right]=
\left[\begin{array}{cc}0&0\\0&1\end{array}\right],\end{equation*}
and the proof is completed.
\end{proof}

Similarly, if we consider the right regular action of $\mathcal{U}(M_2(\mathbb Z_p))$ on the vertices of $\overline\Gamma(M_2(\mathbb Z_p))$ defined by $(U, X)\rightarrow XU$, we have that the
distinct orbits are
\begin{equation*}o\left(\left[\begin{array}{cc}1&0\\ b&0\end{array}\right]\right),\;\;\;for\;\;some\;\;b\in\mathbb Z_p,
\;\;and \;\;o\left(\left[\begin{array}{cc}0&0\\0&1\end{array}\right]\right).\end{equation*}

On the other hand, for any ring $R$, if $u\in R$ is a unit and $x\in R$ is any element of $R$ we have $ann_r(ux)=ann_r(x)$ and $ann_l(xu)=ann_l(x)$. Consequently, the distinct right
annihilators of a single element in the ring $M_2(\mathbb Z_p)$ are
\begin{equation}\label{eq1} ann_r\left(\left[\begin{array}{cc}1&a\\0&0\end{array}\right]\right)=\left\{\left[\begin{array}{cc}-a z&-a w\\z&w\end{array}\right]\;\;:\;\;z,w\in\mathbb Z_p\right\}\end{equation}
and
\begin{equation}\label{eq2} ann_r\left(\left[\begin{array}{cc}0&0\\0&1\end{array}\right]\right)=\left\{\left[\begin{array}{cc}z&w\\0&0\end{array}\right]\;\;:\;\;z,w\in\mathbb Z_p\right\}\end{equation}

Since any vertex in the graph $\overline\Gamma(M_2(\mathbb Z_p))$ belongs to a right annihilator, if follows that
\begin{equation*}D=\left\{\left[\begin{array}{cc}1&a \\0&0\end{array}\right]\;\;:\;\;a\in\mathbb Z_p\right\}\cup\left\{\left[\begin{array}{cc}0&0\\0&1\end{array}\right]\right\}\end{equation*}
is a dominating set for the graph $\overline\Gamma(M_2(\mathbb Z_p))$. Therefore, $p+1$ is an upper bound for the domination number of the graph
$\overline\Gamma(M_2(\mathbb Z_p))$. We will prove that this upper bound is the exact domination number of $\overline\Gamma(M_2(\mathbb Z_p))$.

Similarly, the distinct left annihilators are
\begin{equation}\label{eq3} ann_l\left(\left[\begin{array}{cc}1&0\\ b&0\end{array}\right]\right)=\left\{\left[\begin{array}{cc}-b x&x \\-b y&y\end{array}\right]\;\;:\;\;x,y\in\mathbb Z_p\right\}\end{equation}
and
\begin{equation}\label{eq4} ann_l\left(\left[\begin{array}{cc}0&0\\0&1\end{array}\right]\right)=\left\{\left[\begin{array}{cc}x&0\\y&0\end{array}\right]\;\;:\;\;x,y\in\mathbb Z_p\right\}\end{equation}

Note that the intersection of two distinct right annihilators (or two left annihilators) contains only the zero element.

\begin{Lemma}\label{li}
The intersection of a right annihilator of type (\ref{eq1}) or (\ref{eq2})  with a left annihilator of type (\ref{eq3}) or (\ref{eq4})   is a set with $p$ elements. \end{Lemma}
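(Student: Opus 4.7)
The plan is to carry out the intersection explicitly in each of the four possible type-combinations: (\ref{eq1}) with (\ref{eq3}), (\ref{eq1}) with (\ref{eq4}), (\ref{eq2}) with (\ref{eq3}), and (\ref{eq2}) with (\ref{eq4}). In each case I would equate a generic matrix from the right annihilator with a generic matrix from the left annihilator entry by entry, obtaining a linear system in the four parameters $z,w,x,y\in\mathbb Z_p$ (with $a$ or $b$ as fixed constants where applicable). The claim then reduces to checking that in every case three of the four resulting equations are independent and the fourth is automatically implied, leaving a one-parameter family and hence exactly $p$ common elements.

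To illustrate, for the case (\ref{eq1})$\cap$(\ref{eq3}) the four entry equations read $-az=-bx$, $-aw=x$, $z=-by$, $w=y$. The last three give at once $y=w$, $z=-bw$ and $x=-aw$, and substituting into the first yields the tautology $abw=abw$. Hence the intersection is parametrized by $w\in\mathbb Z_p$ alone, producing exactly $p$ matrices. The remaining three cases are handled by the identical recipe and are in fact easier, because the annihilators of type (\ref{eq2}) and (\ref{eq4}) already have two entries fixed at zero, which trivializes two of the four equations.

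Conceptually this is the expected answer: each annihilator is a two-dimensional $\mathbb Z_p$-subspace of the four-dimensional space $M_2(\mathbb Z_p)$, so the intersection could a priori have dimension anywhere between $0$ and $2$; the content of the lemma is that it is always exactly one-dimensional. The main (and rather mild) obstacle is purely bookkeeping: one must verify that no sub-case degenerates when the scalars $a$ or $b$ happen to vanish, and that the redundant equation really is redundant in every case rather than producing an extra constraint that would shrink the solution set below $p$ elements. No additional ring-theoretic input is needed beyond the explicit parametrizations (\ref{eq1})--(\ref{eq4}).
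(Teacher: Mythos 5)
Your proposal is correct and follows essentially the same route as the paper: both reduce the intersection to the entry-wise homogeneous linear system in the four parameters, the only cosmetic difference being that the paper invokes the dichotomy (the intersection of two distinct order-$p^2$ additive subgroups has order $1$ or $p$) and then observes the coefficient matrix is singular, whereas you solve the system explicitly to exhibit the one-parameter family. Your explicit check that the fourth equation reduces to the tautology $abw=abw$ even when $a$ or $b$ vanishes is a slightly more complete treatment of the main case than the paper gives.
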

\begin{proof}
First note that both a left annihilator and a right annihilator are  subgroups, of order $p^2$, of the additive group $M_2(\mathbb Z_p)$.  Since no right annihilator of type (\ref{eq1}) equals a left annihilator of type (\ref{eq3}) it follows that their intersection is subgroup of order $p$ or else  the trivial subgroup.

The result is clearly true if the right annihilator is of type (\ref{eq2}) or the left annihilator is of type (\ref{eq4}). So, assume that the right annihilator is of type (\ref{eq1}) and the left annihilator is of type (\ref{eq3}).
Note that, the cardinality of the intersection
\begin{equation*}ann_r\left(\left[\begin{array}{cc}1&a\\0&0\end{array}\right]\right)\cap
ann_l\left(\left[\begin{array}{cc}1&0\\ b&0\end{array}\right]\right),\end{equation*}
equals the cardinality of the solution set of the homogeneous system of linear equation
\begin{equation*}a z-b x=0;\;\;x+a w=0;\;\;z+b y=0;\;\;w=y.\end{equation*}
Since the coeficient matrix of this system is singular we get the result.
\end{proof}

\begin{Theorem} \label{domop}
The domination number of the zero divisor graph $\overline\Gamma(\mathbb Z_p[i, j , k])$, where $p$ is an odd prime number, is $p+1$.\end{Theorem}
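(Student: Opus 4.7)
The plan is to establish the matching lower bound $\gamma(\overline{\Gamma}(\mathbb{Z}_p[i,j,k])) \geq p+1$; combined with the upper bound already demonstrated via the explicit dominating set constructed just before Lemma~\ref{li}, this will yield the stated equality. Throughout I identify $\mathbb{Z}_p[i,j,k]$ with $M_2(\mathbb{Z}_p)$.

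First I would record the preliminary observation that every non-zero zero divisor lies in exactly one of the $p+1$ right annihilators (\ref{eq1})--(\ref{eq2}) and in exactly one of the $p+1$ left annihilators (\ref{eq3})--(\ref{eq4}): the disjointness remark preceding Lemma~\ref{li} and the count $(p+1)(p^{2}-1) = p^{3}+p^{2}-p-1 = |Z^{*}(M_2(\mathbb{Z}_p))|$ together force this. One thereby obtains a $(p+1)\times(p+1)$ grid of cells $R\cap L$, each containing $p-1$ non-zero elements by Lemma~\ref{li}. Given a dominating set $D$, set $I = \{ann_r(d) : d\in D\}$, $J = \{ann_l(d) : d\in D\}$, $u = (p+1)-|I|$, $v = (p+1)-|J|$. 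If $u=0$ or $v=0$ then $|D| \geq p+1$ immediately since $|I|, |J| \leq |D|$, so I assume $u, v \geq 1$. A vertex $d$ dominates only elements of $ann_r(d) \cup ann_l(d) \cup \{d\}$, hence every non-zero element of an ``uncovered'' cell $R\cap L$ with $R \notin I$ and $L \notin J$ must itself lie in $D$, forcing at least $uv(p-1)$ elements into $D$.

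The heart of the argument is the following bijection claim: for every non-zero $d$ lying in a fixed left annihilator class $L$, the value $ann_r(d)$ is the same right annihilator $\phi(L)$, and $\phi$ is a bijection between the two sets of $p+1$ annihilator classes. I would verify this by direct computation on the explicit parameterisations (\ref{eq3})--(\ref{eq4}): for $d$ in the left annihilator (\ref{eq3}) indexed by $b\in\mathbb{Z}_p$ one finds that $ann_r(d)$ is (\ref{eq1}) with parameter $-1/b$ if $b \neq 0$, is (\ref{eq2}) if $b=0$, and for $d$ in the distinguished class (\ref{eq4}) one obtains (\ref{eq1}) with $a=0$. Granting this, the $uv(p-1)$ forced elements of $D$ collectively contribute only $v$ distinct values to $I$, so $D$ must supply at least $\max\{0,\, p+1-u-v\}$ further vertices to bring $|I|$ up to $p+1-u$.

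Combining, $|D| \geq uv(p-1) + \max\{0,\, p+1-u-v\}$. A short optimisation shows that over positive integers $u, v$ this quantity is minimised at $u=v=1$ with value $2(p-1)$, which is $\geq p+1$ whenever $p \geq 3$. The main obstacle will be verifying the bijection claim for $\phi$; the remainder amounts to routine bookkeeping on the annihilator structure already in place.
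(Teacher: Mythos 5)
Your argument is correct, and it lives in the same framework as the paper's proof --- both rest on the partition of the vertex set into the $p+1$ right-annihilator classes (\ref{eq1})--(\ref{eq2}) and the $p+1$ left-annihilator classes (\ref{eq3})--(\ref{eq4}), on their pairwise trivial intersections, and on Lemma \ref{li}. The difference is in how the lower bound is closed. The paper argues directly: with $|D|=k<p+1$ there is a right class $ann_r(X)$ and a left class $ann_l(Y)$ missed by $D$; every vertex of $ann_r(X)$ must then be dominated through some $ann_l(D_i)$, so $ann_r(X)\cap ann_l(Y)=\{0\}$, contradicting Lemma \ref{li}. That short argument silently assumes the nonzero elements of $ann_r(X)\cap ann_l(Y)$ are not themselves members of $D$ (a vertex in $D$ need not be adjacent to anything in $D$). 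Your version confronts exactly this loophole: you force the $uv(p-1)$ elements of the uncovered cells into $D$, and then use the extra structural fact --- your bijection $\phi$, i.e.\ that $ann_r(d)$ depends only on the left class containing the rank-one matrix $d$ --- to show these forced elements can populate at most $v$ of the $p+1-u$ classes in $I$, whence $|D|\geq uv(p-1)+\max\{0,\,p+1-u-v\}\geq 2(p-1)\geq p+1$ for $p\geq 3$. The computations behind $\phi$ check out (writing $d$ as an outer product $cr$, the class of $d$ under containment in right annihilators is governed by the column $c$ while $ann_r(d)$ is governed by the row $r$, which is constant on each left class), and the two contributions to $|D|$ are indeed disjoint since the extra vertices must realise $ann_r$-values outside $\{\phi(L):L\notin J\}$. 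So your proof is somewhat longer but is actually tighter than the paper's at the one delicate point; note also that the bound is sharp for $p=3$, where $2(p-1)=p+1$, so the bijection lemma and the $|I|$ bookkeeping are genuinely needed and cannot be replaced by the cruder estimate $|D|\geq uv(p-1)$ alone.
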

\begin{proof}
Suppose the theorem were false. Then, we could find a dominating set $D=\{D_1, \ldots, D_k\}$, where $k<p+1$. Hence, there is a right annihilator $ann_r(X)$ of type (\ref{eq1}) or (\ref{eq2}) that is not among the $k$ right annihilators $ann_r(D_1), \ldots, ann_r(D_k)$ and a left annihilator $ann_l(Y)$ that is not among the $k$ left annihilators $ann_l(D_1), \ldots, ann_l(D_k)$. Since all the elements in the right annihilator $ann_r(X)$ are vertices of the graph $\overline\Gamma(\mathbb Z_p[i, j , k])$, it follows that
\begin{equation*}ann_r(X)\subseteq \cap_{i=1}^k ann_l(D_i).\end{equation*}
Therefore, $ann_r(X)\cap ann_l(Y)=\{0\}$, which contradicts lemma \ref{li}.
\end{proof}

As a consequence of the previous result we can easily compute the domination number of $\overline\Gamma(\mathbb Z_n[i, j , k])$ when $n$ is an odd square-free integer.

\begin{Theorem} \label{domosf}
Let $n= p_1\cdots p_k$ with $p_i$ prime for every $i$. Then, the domination number of the zero divisor graph $\overline\Gamma(\mathbb Z_n[i, j , k])$ is $ k+p_1+...+p_k$.
\end{Theorem}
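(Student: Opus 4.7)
The plan is to reduce to the prime case via the Chinese remainder theorem and then match a natural construction against a rigidity-based lower bound. Since $n$ is odd and squarefree, the isomorphism (\ref{decom}) together with the identification $\mathbb Z_{p_i}[i,j,k]\cong M_2(\mathbb Z_{p_i})$ gives $R:=\mathbb Z_n[i,j,k]\cong R_1\oplus\cdots\oplus R_k$ with $R_i=M_2(\mathbb Z_{p_i})$. By Theorem \ref{domop}, each $\overline\Gamma(R_i)$ has domination number $p_i+1$, so the desired total is $\sum_{i=1}^k(p_i+1)=k+p_1+\cdots+p_k$.

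For the upper bound, fix for each $i$ a minimum dominating set $\Delta_i$ of $\overline\Gamma(R_i)$ and form the set of single-coordinate elements
\[
D:=\bigcup_{i=1}^k\bigl\{(0,\ldots,0,d,0,\ldots,0):d\in\Delta_i,\ d\text{ placed in slot }i\bigr\}.
\]
A two-case check shows that $D$ dominates every $v=(v_1,\ldots,v_k)\in Z^*(R)$: if some coordinate $v_\ell$ vanishes, then any $e\in D$ supported in slot $\ell$ satisfies $ev=0$ and $e\neq v$; otherwise every $v_i$ is nonzero, but since $v$ is not a unit some $v_i\in Z^*(R_i)$, and a corresponding $d\in\Delta_i$ that annihilates $v_i$ on one side provides the required dominator in $D$. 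Hence $\gamma(\overline\Gamma(R))\leq|D|=k+\sum_{i=1}^k p_i$.

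For the matching lower bound, let $D$ be any dominating set of $\overline\Gamma(R)$ and, for each index $i$, consider the witness vertices $v=(u_1,\ldots,u_{i-1},z,u_{i+1},\ldots,u_k)$ with $z\in Z^*(R_i)$ and with $u_j$ an arbitrarily fixed unit in $R_j$ for each $j\neq i$. If $d=(d_1,\ldots,d_k)\in D$ dominates such a $v$, then $dv=0$ or $vd=0$; comparing components and using that each $u_j$ is a unit forces $d_j=0$ for every $j\neq i$, while the $i$-th component must satisfy $d_iz=0$ or $zd_i=0$. Letting $z$ range over $Z^*(R_i)$, the $i$-th projections of the single-coordinate elements of $D$ therefore form a dominating set of $\overline\Gamma(R_i)$, and so at least $p_i+1$ such members must lie in $D$. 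Since single-coordinate elements supported in different slots are automatically distinct, summing over $i$ yields $|D|\geq\sum_{i=1}^k(p_i+1)$, matching the upper bound.

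The main obstacle is precisely this rigidity step: one must craft witnesses whose \emph{unit padding} in every non-$i$ coordinate eliminates all possibilities for a dominator except those concentrated in a single factor. Once that is in place, the problem decouples cleanly into the $k$ independent factor problems already resolved in Theorem \ref{domop}.
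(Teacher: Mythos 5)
Your construction for the upper bound is exactly the paper's: embed the dominating sets from Theorem \ref{domop} as single-coordinate elements; the paper then simply asserts minimality with ``it is easy to see,'' whereas you supply an actual lower-bound argument, which is the right instinct. However, there are two places where your argument, as written, does not quite close.

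The minor one is in the upper bound: when every $v_i\neq 0$ and some $v_i\in Z^*(R_i)$, domination of $\Delta_i$ in $\overline\Gamma(R_i)$ only guarantees a one-sided annihilator $d\in\Delta_i$ of $v_i$ when $v_i\notin\Delta_i$; if $v_i\in\Delta_i$ you must check separately that some element of $\Delta_i$ annihilates it. This holds for the paper's explicit $\Delta_i$ (every vertex of $\overline\Gamma(M_2(\mathbb Z_{p_i}))$, including each member of $\Delta_i$, lies in the right annihilator of a member of $\Delta_i$ distinct from itself), but it is not automatic for an ``arbitrary minimum dominating set'' as you phrase it.

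The genuine gap is in the rigidity step of the lower bound. You assume that every witness $v=(u_1,\ldots,z,\ldots,u_k)$ is adjacent to some $d\in D$; but vertices belonging to $D$ are exempt from the domination requirement, so $D$ may contain the witness itself instead of any single-coordinate dominator for $z$. For such $z$ nothing forces a single-coordinate element of $D$ annihilating $z$, and your conclusion that ``the $i$-th projections of the single-coordinate elements of $D$ form a dominating set of $\overline\Gamma(R_i)$'' fails. The count is still salvageable: let $S_i$ be the union of those projections with the set of $z\in Z^*(R_i)$ whose chosen witness lies in $D$. Then $S_i$ dominates $\overline\Gamma(R_i)$ (any $z$ whose witness is in $D$ belongs to $S_i$ outright), so $|S_i|\geq p_i+1$ by Theorem \ref{domop}; and the elements of $D$ contributing to $S_i$ --- single-coordinate elements supported in slot $i$, together with witnesses having a zero divisor in slot $i$ and units elsewhere --- are pairwise distinct and disjoint across different values of $i$ (a coordinate cannot be both a unit and a zero divisor, and for $k\geq 2$ a single-coordinate element has a vanishing coordinate while a witness does not). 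Summing over $i$ recovers $|D|\geq\sum_i(p_i+1)$. So your strategy is sound and the statement follows, but the step as written does not cover dominating sets containing witness vertices.
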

\begin{proof}
Let $C_i :=\{M_{i,1},...,M_{i, 1+p_i}\} $ be the dominating set for the graph $\overline\Gamma(\mathbb Z_{p_i}[i, j , k])$ given by Theore \ref{domop}. Now, it is easy to see that the set
$$   \bigcup_{i=1}^k \{(0,0,...,M_{i,1},...0,0..),...,(0,0,...,M_{i,1+p_i},...0,0..)\} $$
is a minimal dominating set of $\overline\Gamma(\mathbb Z_n[i, j , k])$ and hence the result.
\end{proof}

In a similar way, we can proof the following result.

\begin{Theorem}
Let $n=2^s p_1\cdots p_k$ with $p_i$ is prime for every $i$ and $s>0$. Then, the domination number of the zero divisor graph $\overline\Gamma(\mathbb Z_n[i, j , k])$ is $1+k+p_1+...+p_k$.
\end{Theorem}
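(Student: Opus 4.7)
The plan is to mimic Theorem~\ref{domosf}, now incorporating the extra $2^s$-factor. Via the Chinese remainder theorem identify
\[
\mathbb{Z}_n[i,j,k]\;\cong\;R_0\oplus R_1\oplus\cdots\oplus R_k,\qquad R_0:=\mathbb{Z}_{2^s}[i,j,k],\;\;R_\ell:=\mathbb{Z}_{p_\ell}[i,j,k]\;(\ell\geq 1).
\]
For the upper bound I would combine the optimal dominating sets of the factors: the singleton $\{2^{s-1}(1+i+j+k)\}$ from Theorem~\ref{dompot2} for $R_0$, together with the $(p_\ell+1)$-element sets $C_\ell=\{M_{\ell,1},\ldots,M_{\ell,p_\ell+1}\}$ from Theorem~\ref{domop} for each $R_\ell$ with $\ell\geq 1$. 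Embedding each of these at its own coordinate (zero elsewhere) gives a set $D$ of cardinality $1+\sum_{\ell=1}^{k}(p_\ell+1)=1+k+p_1+\cdots+p_k$. To verify $D$ dominates, given a nonzero zero divisor $z=(z_0,\ldots,z_k)$ one picks a coordinate $i$ at which $z_i$ fails to be a unit: if $z_i=0$ then every element of $D$ embedded at coordinate $i$ annihilates $z$, while if $z_i\in Z(R_i)^*$ the dominating property in the $i$-th factor supplies an embedded $d$ with $d\,z_i=0$ or $z_i\,d=0$.

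The main obstacle will be the matching lower bound. My approach is to exploit, for each coordinate $i$ and each $x\in Z(R_i)^*$, the family of ``coordinate-test'' vertices
\[
v_i(x;\mathbf{u})\;:=\;(u_0,\ldots,u_{i-1},x,u_{i+1},\ldots,u_k),\qquad u_j\in\mathcal{U}(R_j)\;(j\neq i).
\]
If $d=(d^{(0)},\ldots,d^{(k)})\in D$ dominates such a $v_i(x;\mathbf{u})$, then $d^{(j)}u_j=0$ or $u_jd^{(j)}=0$ for every $j\neq i$, forcing $d^{(j)}=0$ there; thus $d$ is supported only at coordinate $i$, and $d^{(i)}\in Z(R_i)^*$ annihilates $x$ on one side. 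For each $(i,x)$ there are $\prod_{j\neq i}|\mathcal{U}(R_j)|$ such test vertices; if all of them lay inside $D$ for some $(i,x)$ then $|D|$ would already exceed $1+k+p_1+\cdots+p_k$ thanks to the unit counts in Remarks~\ref{remuniimp} and \ref{remuni2}, so we may assume a test vertex outside $D$ exists for each $(i,x)$. Letting $S_i\subseteq D$ denote the elements of $D$ supported only at coordinate $i$, the projection of $S_i$ onto $R_i$ is therefore a dominating set of $\overline{\Gamma}(R_i)$, giving $|S_i|\geq\gamma(\overline{\Gamma}(R_i))$; this equals $1$ for $i=0$ by Theorem~\ref{dompot2} and $p_i+1$ for $i\geq 1$ by Theorem~\ref{domop}. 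Since the $S_i$ are pairwise disjoint subsets of $D$,
\[
|D|\;\geq\;\sum_{i=0}^{k}|S_i|\;\geq\;1+\sum_{i=1}^{k}(p_i+1)\;=\;1+k+p_1+\cdots+p_k.
\]
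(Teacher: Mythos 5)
Your overall strategy is sound, and in fact you are supplying an argument the paper never writes down: its ``proof'' of this theorem is literally the sentence ``in a similar way,'' deferring to Theorem~\ref{domosf}, whose own proof just exhibits the union of the factorwise dominating sets and asserts minimality. Your upper bound is the same construction the paper intends and is essentially fine (one should also check the vertices $z$ whose distinguished coordinate $z_i$ already lies in the chosen set $C_i$, but this works because $\left[\begin{smallmatrix}0&0\\0&1\end{smallmatrix}\right]$ left-annihilates every $\left[\begin{smallmatrix}1&a\\0&0\end{smallmatrix}\right]$ and $\bigl(2^{s-1}(1+i+j+k)\bigr)^2=0$ in $\mathbb Z_{2^s}[i,j,k]$).

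The lower bound, however, contains a genuine false step: the claim that if all test vertices $v_i(x;\mathbf{u})$ lie in $D$ for some $(i,x)$ then $|D|$ already exceeds $1+k+p_1+\cdots+p_k$. Take $n=2p$ with $p\geq 7$ and let $i$ be the odd coordinate: the number of test vertices attached to a fixed $x\in Z(\mathbb Z_p[i,j,k])^*$ is $|\mathcal U(\mathbb Z_2[i,j,k])|=2^{3}=8$ by Remark~\ref{remuni2}, which does not exceed the target $p+2\geq 9$. So you cannot ``assume a test vertex outside $D$ exists for each $(i,x)$,'' and without that assumption the projection $\pi_i(S_i)$ need not dominate $\overline\Gamma(R_i)$, which is exactly the point where $|S_i|\geq\gamma(\overline\Gamma(R_i))$ is used. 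The hole is repairable by counting more carefully rather than globally: if $m_i$ denotes the number of $x\in Z(R_i)^*$ all of whose test vertices lie in $D$, then $\pi_i(S_i)$ together with those $m_i$ elements dominates $\overline\Gamma(R_i)$, so $|S_i|\geq\gamma(\overline\Gamma(R_i))-m_i$; on the other hand each such $x$ forces at least $\prod_{j\neq i}|\mathcal U(R_j)|\geq 1$ elements of $D$ that are supported at every coordinate and hence lie outside all the $S_j$ and outside the analogous families for other coordinates. Summing gives $|D|\geq\sum_i\bigl(\gamma(\overline\Gamma(R_i))-m_i\bigr)+\sum_i m_i=\sum_i\gamma(\overline\Gamma(R_i))$, which is the desired bound. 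As written, though, the lower-bound argument does not go through for all $n$ covered by the statement.
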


We end this section presenting two open problems:
\begin{enumerate}
\item For an odd prime number $p$ and a positive integer $t$, what is the domination number of $\overline\Gamma(\mathbb Z_{p^t}[i, j , k])$?
\item Let $t$ be a positive integer and denote by $\mathbb F_{q}$ the finite field of $q=p^t$ elements. For a positive integer $n>2$, what is the domination number of the zero divisor graph $\overline\Gamma(M_n(\mathbb F_q))$?
\end{enumerate}


\begin{thebibliography}{10}

\bibitem{ab}
Abu Osba, E.; Al-Addasi, S.; Al-Khamaiseh, B.
\newblock Some properties of the zero-divisor graph for the ring of Gaussian integers modulo $n$.
\newblock {\em Glasg. Math. J.}  53  (2011),  no. 2, 391--399.

\bibitem{ab1}
Abu Osba, E.; Al-Addasi, S.; Abu Jaradeh, N.
\newblock Zero divisor graph for the ring of Gaussian integers modulo $n$.
\newblock{\em Comm. Algebra } 36  (2008),  no. 10, 3865--3877.

\bibitem{ak}
Akbari, S.; Mohammadian, A.
\newblock On zero-divisor graphs of finite rings.
\newblock{\em J. Algebra}  314  (2007),  no. 1, 168--184.

\bibitem{ak1}
Akbari, S.; Mohammadian, A.
\newblock Zero-divisor graphs of non-commutative rings.
\newblock{\em J. Algebra}  296  (2006),  no. 2, 462--479.

\bibitem{an}
Anderson, D. F.; Livingston, P. S.
\newblock The zero-divisor graph of a commutative ring.
\newblock{\em J. Algebra} 217 (1999), 434--447.

\bibitem{ax}
Axtell, M.; Stickles, J.; Warfel, J.
\newblock Zero-divisor graphs of direct products of commutative rings.
\newblock{\em Houston J. Math.}  32  (2006),  no. 4, 985--994.

\bibitem{be}
Beck, I.
\newblock Coloring of commutative rings.
\newblock{\em J. Algebra} 116 (1988), 208--226.

\bibitem{BIR}
Birkenmeier, G.F.
\newblock Split-null extensions of strongly right bounded rings.
\newblock {\em Publ. Mat.} 34 (1) (1990), 37--44.

\bibitem{bo}
Bo\v{z}ic, I.; Petrovic, Z.
\newblock Zero-divisor graphs of matrices over commutative rings.
\newblock{\em Comm. Algebra}  37  (2009),  no. 4, 1186--1192.

\bibitem{co}
Cohn, P. M.
\newblock Reversible rings.
\newblock{\em Bull. London Math. Soc.}  31  (1999),  no. 6, 641--648.

\bibitem{ga}
Garey, M. R.; Johnson, D. S.
 \newblock {\em Computers and intractability. A guide to the theory of NP-completeness}.
 \newblock W. H. Freeman and Co., San Francisco, Calif. 1979.

\bibitem{gi}
Gibbons, A.
 \newblock {\em Algorithmic graph theory}.
 \newblock Cambridge University Press, Cambridge. 1985.

\bibitem{gmo}
Grau, J.M.; Miguel, C; Oller-Marc\'e{n}, A.M.
\newblock On the structure of quaternion rings over $\mathbb{Z}/n\mathbb{Z}$.
\newblock {\em Adv. Appl. Clifford Al.} to appear.

\bibitem{GUT}
Gutana, M; Kisielewicz, A.
\newblock Reversible group rings.
\newblock {\em J. Algebra} 279 (2004), 280--291.

\bibitem{ku}
Kuzmina, A. S.
\newblock Finite rings with Eulerian zero-divisor graphs.
\newblock{\em J. Algebra Appl.}  11 (2012), no. 3, 1250055.

\bibitem{ke}
Kert\'esz, A.
 \newblock {\em Lectures on Artinian rings}.
 \newblock Disquisitiones Mathematicae Hungaricae 14. Akad\'miai Kiad\'o, Budapest, 1987.

\bibitem{lam}
Lam, T. Y.
\newblock {\em A first course in noncommutative rings}.
\newblock Graduate Texts in Mathematics 131. Springer-Verlag, New York, 2001.

\bibitem{SIM}
Lambek, J.
\newblock On the representation of modules by sheaves of factor modules.
\newblock {\em Canad. Math. Bull.} 14 (1971), 359--368.

\bibitem{LI}
Li, Y.; Bell, H.E.; Phipps, C.
\newblock On reversible group rings.
\newblock {\em Bull. Austral. Math. Soc.} 74 (2006), 139--142.

\bibitem{REV2}
Marks, G.
\newblock Reversible and symmetric rings.
\newblock {\em J. Pure Appl. Algebra} 174 (2002), no. 3, 311--318.

\bibitem{mi}
Miguel, C.
\newblock Balanced zero-divisor graphs of matrix rings.
\newblock{\em Lobachevskii J. Math.} 34 (2013), no. 2, 137--141.

\bibitem{na}
Nazzal, Kh.; Ghanem, M.
\newblock On the line graph of the zero divisor graph for the ring of Gaussian integers modulo $n$.
\newblock{\em Int. J. Comb.} (2012), Art. ID 957284.

 \bibitem{re}
Redmond, S.,
 \newblock The zero-divisor graph of a noncommutative ring.
  \newblock{\em International J. Commutative Rings} 1 (2002), no. 4, 203--211.

  \bibitem{vi}
	Vign\'{e}ras, M.F.
   \newblock {\em Arithm\'{e}tique des alg\`{e}bres de quaternions}.
   \newblock Lecture Notes in Mathematics 800. Springer, Berlin, 1980.

\bibitem{wu}
Wu, T.
\newblock On directed zero-divisor graphs of finite rings.
\newblock{\em Discrete Math.}  296 (2005), no 1, 73--86.







\end{thebibliography}
\end{document}